\newtheorem{theorem}{Theorem} [section]
\newtheorem{lemma}[theorem]{Lemma} 
\newtheorem{proposition}[theorem]{Proposition} 
\newtheorem{claim}[theorem]{Claim} 
\title{A lower bound for Torelli-K-quasiconformal homogeneity}
\date{}
\author{Mark Greenfield\thanks{Email: greenfield@caltech.edu. Department of Mathematics, California Institute of Technology, Pasadena, CA 91125.}
       }
\begin{document}
\lhead{M. Greenfield}
\chead{}
\rhead{}

\maketitle

\begin{abstract}  A closed hyperbolic Riemann surface $M$ is said to be $K$-quasiconformally homogeneous if there exists a transitive family $\mathcal{F}$ of $K$-quasiconformal homeomorphisms. Further, if all $[f] \subset \mathcal{F}$ act trivially on $H_1(M;\mathbb{Z})$, we say $M$ is Torelli-$K$-quasiconformally homogeneous. We prove the existence of a uniform lower bound on $K$ for Torelli-$K$-quasiconformally homogeneous Riemann surfaces. This is a special case of the open problem of the existence of a lower bound on $K$ for (in general non-Torelli) $K$-quasiconformally homogeneous Riemann surfaces. 
\end{abstract}

\section{Introduction}   
$K$-Quasiconformal homeomorphisms of a Riemann surface $M$ generalize the notion of conformal maps by bounding the dilatation at any point of $M$ by $K < \infty$. Let $\mathcal{F}$ be the family of all $K$-quasiconformal homeomorphisms of $M$. If for any points $p, q \in M$, there is a  map $f\in\mathcal{F}$ such that $f(p) = q$, that is, the family $\mathcal{F}$ is transitive, then $M$ is said to be $K$-quasiconformally homogeneous. Quasiconformal homogeneity was first studied by Gehring and Palka in \cite{gehring} in 1976 for genus zero surfaces and analogous higher dimensional manifolds. Gehring and Palka also showed that the only 1-quasiconformally homogeneous (i.e. $\mathcal{F}$ is transitive with all maps conformal) genus zero surfaces are non-hyperbolic. It was also found that there do exist genus zero surfaces which are $K$-quasiconformal for $1 < K < \infty$. 

A more recent question is whether there exists a uniform lower bound on $K$ for $K$-quasiconformally homogeneous hyperbolic manifolds. Using Sullivan's Rigidity Theorem, Bonfert-Taylor, Canary, Martin, and Taylor showed in \cite{bonfert} that for dimension $n \geq 3$, there exists such a universal constant $\mathcal{K}_n > 1$ such that for any $K$-quasiconformally homogeneous hyperbolic $n$-manifold other than $\mathbb{D}^n$, we have $K \geq \mathcal{K}_n$. In \cite{bonfert} it is also shown that $K$-quasiconformally homogeneous hyperbolic $n$-manifolds for $n \geq 3$ are precisely the regular covers of closed hyperbolic orbifolds. For two-dimensional surfaces, such a classification is shown to be false with the construction of $K$-quasiconformally homogeneous surfaces which are not quasiconformal deformations of regular covers of closed orbifolds in \cite{exoticqc}. 

In dimension two, Bonfert-Taylor, Bridgeman, Canary, and Taylor showed the existence of such a bound for a specific class of closed hyperbolic surfaces which satisfy a fixed-point condition in \cite{fixedpoint}. Bonfert-Taylor, Martin, Reid, and Taylor showed in \cite{strongqc} the existence of a similar bound $\mathcal{K}_c > 1$ such that if $M \neq \mathbb{H}^2$ is a $K$-strongly quasiconformally homogeneous hyperbolic surface, that is, each member of the transitive family of $K$-quasiconformally homogeneous maps is homotopic to a conformal automorphism of $M$, then $K \geq \mathcal{K}_c$. Kwakkel and Markovic proved the conjecture of Gehring and Palka for genus zero surfaces by showing the existence of a lower bound on $K$ for hyperbolic genus zero surfaces other than $\mathbb{D}^2$ in \cite{qchom}. Additionally, it was shown by Kwakkel and Markovic that for surfaces of positive genus, only maximal surfaces can be $K$-quasiconformally homogeneous (Proposition 2.6 of \cite{qchom}). 

Here, we consider a special case of the problem for closed hyperbolic Riemann surfaces of arbitrary genus.  Recall that the mapping class group of a surface $M$, denoted MCG($M$), consists of homotopy classes of orientation-preserving homeomorphisms of $M$. In general, $K$-quasiconformal homeomorphisms are representatives of any mapping class in MCG($M$). The Torelli subgroup $\mathcal{I}(M) \leq$ MCG($M$) contains those elements of MCG($M$) which act trivially on $H_1(M;\mathbb{Z})$, the first homology group of $M$ (see e.g. \S2.1 and \S7.3 of \cite{primer}). This means that the image of any closed curve $c \subset M$ under a Torelli map must be some curve homologous to $c$. We define a closed Riemann surface $M$ to be {\it Torelli-$K$-quasiconformally homogeneous} if it is $K$-quasiconformally homogeneous and there exists transitive family $\mathcal{F}$ of $K$-quasiconformal homeomorphisms which consists of maps whose homotopy classes are in the Torelli group of $M$. That is, all $f \in \mathcal{F}$ are homologically trivial. Farb, Leninger, and Margalit found bounds on the dilatation of related pseudo-Anosov maps on a Riemann surface in \cite{dilatation}. In particular, they give the following result as Proposition 2.6: 

\begin{proposition}
\label{prop2.6}
If $g \geq 2$, then $L(\mathcal{I}(M)) > .197$, where $L$ is the logarithm of the minimal dilatation of pseudo-Anosov maps in $\mathcal{I}(M)$. 
\end{proposition}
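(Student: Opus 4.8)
The plan is to reduce the statement to a rigidity property of the homological action of a \emph{lift} of $f$ to a double cover, exploiting the observation that a Torelli map can carry no orientable invariant foliation. Concretely, let $f\in\mathcal I(M)$ be pseudo-Anosov with dilatation $\lambda=e^{L}$ and invariant foliations $\mathcal F^{u},\mathcal F^{s}$. If $\mathcal F^{u}$ were orientable it would be defined by a closed $1$-form $\omega$, nonzero in $H^{1}(M;\mathbb R)$ because a pseudo-Anosov foliation has only saddle-type singularities; and then $f^{*}[\omega]=\pm\lambda^{-1}[\omega]$. But $f\in\mathcal I(M)$ forces $f^{*}=\mathrm{id}$ on $H^{1}(M;\mathbb Z)$, contradicting $\lambda>1$. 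Hence $\mathcal F^{u}$ is non-orientable; its orientation character $\pi_{1}(M)\to\mathbb Z/2$ determines a double cover $p\colon\widehat M\to M$ (branched over the odd-pronged singularities), on which $p^{*}\mathcal F^{u}$ is orientable, defined by a closed $1$-form $\widehat\omega$. Since $f$ preserves $\mathcal F^{u}$ it lifts to a pseudo-Anosov $\widehat f$ of $\widehat M$; replacing $\widehat f$ by $\widehat f^{2}$ if it reverses the transverse orientation of $p^{*}\mathcal F^{u}$, we get $\widehat f^{*}[\widehat\omega]=\lambda^{-k}[\widehat\omega]$ for some $k\in\{1,2\}$, while the deck involution $\iota$ satisfies $\iota^{*}[\widehat\omega]=-[\widehat\omega]$ and commutes with $\widehat f$.

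Next I would package this algebraically. The involution $\iota$ gives an orthogonal splitting $H^{1}(\widehat M;\mathbb Q)=V^{+}\oplus V^{-}$ for the cup-product symplectic form, preserved by $\widehat f^{*}$; here $V^{+}$ is canonically $H^{1}(M;\mathbb Q)$, on which $\widehat f^{*}$ acts trivially by the Torelli hypothesis, while $[\widehat\omega]\in V^{-}$. Therefore $\lambda^{k}$ is an eigenvalue of an integral symplectic automorphism of $V^{-}\cap H^{1}(\widehat M;\mathbb Z)$. Riemann--Hurwitz together with the Euler--Poincar\'e identity $\sum_{i}(p_{i}-2)=4g-4$ for the prongs of $\mathcal F^{u}$ bounds $\dim V^{-}$ linearly in the genus $g$. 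So $\lambda^{k}$ is an algebraic integer that is simultaneously (i) a root of a reciprocal integral polynomial; (ii) a simple ``bi-Perron'' number, all of whose Galois conjugates lie in the annulus $\lambda^{-k}\le|z|\le\lambda^{k}$ with only $\lambda^{\pm k}$ on the boundary, since it is a pseudo-Anosov dilatation; and (iii) the leading eigenvalue of an explicit integral symplectic matrix of size controlled by $g$.

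The crux — and the step I expect to be the main obstacle — is to extract from (i)--(iii) a lower bound on $L$ that does \emph{not} decay as $g\to\infty$. Purely algebraic inputs will not suffice: Lehmer/Dobrowolski-type lower bounds on the Mahler measure of a reciprocal integral polynomial weaken as the degree grows, and the degree here grows with $g$; and Penner's bound $\log\lambda\ge(\log 2)/(12g-12)$ degrades to $0$ as $g\to\infty$. One must instead use the geometry more seriously: the class $[\widehat\omega]$ lies in the open cone of $H^{1}(\widehat M;\mathbb R)$ dual to the orientable foliation $p^{*}\mathcal F^{u}$, and the non-vanishing of $i(\mathcal F^{u},\mathcal F^{s})$ lifts to show that the $\lambda$- and $\lambda^{-1}$-eigenlines span a non-degenerate $\widehat f^{*}$-invariant symplectic plane inside $V^{-}$; combining positivity of the leading eigenvector, reciprocity, integrality, and the annulus condition then forces a definite spectral gap, and carrying this estimate out carefully is what yields the numerical value $L>0.197$. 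For the finitely many small genera, where the dimension bound is too coarse, the statement instead follows directly from the known lower bounds on the minimal dilatation over \emph{all} pseudo-Anosov maps of $M$, which already exceed $0.197$.
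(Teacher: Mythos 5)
The statement you are proving is not actually proved in this paper: it is quoted verbatim from Farb--Leininger--Margalit, and the argument behind it (which the paper then adapts in Section \ref{mainproof}) is completely different from yours. It is an elementary intersection-number argument: for $[f]\in\mathcal{I}(M)$, Lemmas \ref{lemma2.2}, \ref{lemma2.3}, and \ref{lemma2.4} force $f(c)$ or $f^2(c)$ to intersect a shortest geodesic $c$ at least $2$ or $4$ times, and comparing the resulting hyperbolic length lower bounds (via collars and the four-holed-sphere configuration of Figure \ref{4holesph}) with the upper bound on the length of the image curve in terms of the dilatation yields an explicit, genus-independent constant. No homology of covers and no spectral theory enters.

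Your proposal has a genuine gap at exactly the step you yourself flag as the crux. The first half is sound: a pseudo-Anosov in $\mathcal{I}(M)$ cannot have an orientable invariant foliation, so one passes to the orientation double cover $\widehat M$ and realizes $\lambda^{k}$ as an eigenvalue of an integral symplectic automorphism of the anti-invariant part $V^{-}$ of $H^{1}(\widehat M;\mathbb{Q})$. But $\dim V^{-}$ grows linearly in $g$, and you supply no mechanism by which ``positivity of the leading eigenvector, reciprocity, integrality, and the annulus condition'' would produce a spectral gap uniform in the dimension; that assertion is precisely the theorem to be proved. Every algebraic input you name degrades with the degree (Lehmer/Dobrowolski, Penner's $\log\lambda\geq \log 2/(12g-12)$), and there exist reciprocal bi-Perron integers of large degree with Mahler measure arbitrarily close to $1$, so constraints (i)--(iii) alone cannot give a bound independent of $g$. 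Your fallback also points the wrong way: the minimal dilatation over \emph{all} pseudo-Anosov maps exceeds $e^{0.197}$ only in low genus (it tends to $1$ roughly like $1/g$), so it cannot rescue the large-genus case, which is exactly where your dimension bound is weakest. To close the argument you would have to import the geometric intersection/length estimates anyway, at which point the double-cover machinery is doing no work.
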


This result relies on several lemmas for pseudo-Anosov maps that can also be applied to Torelli-$K$-quasiconformal maps once we show that appropriate $K$-quasiconformal maps must exist. After proving a proposition that allows us to avoid the assumption of pseudo-Anosov maps, we will use an argument similar to that in the proof of Proposition \ref{prop2.6} to give the following result on Torelli-$K$-quasiconformally homogeneous surfaces: 

\begin{theorem} \label{main}
There exists a universal constant $K_T > 1$ such that if $M$ is a Torelli-$K$-quasiconformally homogeneous closed hyperbolic Riemann surface, then $K \geq K_T$. 
\end{theorem}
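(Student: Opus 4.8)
The plan is to follow the proof of Proposition~\ref{prop2.6} in \cite{dilatation}, replacing the hypothesis ``pseudo-Anosov of dilatation $\lambda$'' by ``element of a transitive $K$-quasiconformal Torelli family'' and letting the quasiconformal dilatation $K$ play the role that $\lambda$ plays in the Farb--Leininger--Margalit estimates. The first task is to produce a suitably nontrivial $K$-quasiconformal Torelli map. I would begin with a dichotomy on the given transitive family $\mathcal{F}$. If every $f\in\mathcal{F}$ is homotopic to the identity, then $M$ is in particular $K$-strongly quasiconformally homogeneous (the identity being a conformal automorphism), so the main result of \cite{strongqc} gives $K\geq\mathcal{K}_c>1$ and we may take $K_T\leq\mathcal{K}_c$. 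Otherwise there is $f_0\in\mathcal{F}$ with $[f_0]\neq 1$ in $\mathrm{MCG}(M)$; since $f_0$ is homologically trivial we have $[f_0]\in\mathcal{I}(M)$, and since the Torelli group of a closed surface of genus $\geq 2$ is torsion-free (see \cite{primer}), $[f_0]$ has infinite order. This $f_0$, or a bounded power of it (still $K^n$-quasiconformal and in $\mathcal{I}(M)$), will substitute for the pseudo-Anosov map.

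The proposition that ``avoids the assumption of pseudo-Anosov maps'' should isolate exactly what the lemmas of \cite{dilatation} require. Those estimates say, roughly, that a pseudo-Anosov $\phi\in\mathcal{I}(M)$ of dilatation at most $\Lambda$ forces a configuration of essential simple closed curves --- a short curve, or a pair of disjoint homologous curves of comparable length --- whose existence is incompatible with $g\geq 2$ once $\Lambda$ is close enough to $1$. The only properties of $\phi$ used are (i) $\phi\in\mathcal{I}(M)$, so $\phi(c)$ is homologous to $c$ for every simple closed curve $c$, and (ii) $\phi$ distorts extremal length, hence (via the collar lemma) hyperbolic length of short geodesics, by a factor of at most $\Lambda$. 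A $K$-quasiconformal homeomorphism satisfies (ii) with $\Lambda=K$ by the standard extremal-length distortion inequality, and $f_0$ satisfies (i); moreover $[f_0]\neq 1$ supplies a curve $c$ with $f_0(c)$ not isotopic to $c$, which is the nontriviality the argument needs in place of the pseudo-Anosov dynamics. Thus I would prove: whenever $M$ is Torelli-$K$-quasiconformally homogeneous and $\mathcal{F}$ is not contained in the maps homotopic to the identity, there is a homologically trivial $K$-quasiconformal $h\colon M\to M$ with $[h]\neq 1$ to which the relevant lemmas of \cite{dilatation} apply with the constant $K$; running the argument of Proposition~\ref{prop2.6} then produces a universal $K_T>1$ (possibly weaker than $e^{0.197}$ because of the passage through powers and the strongly-quasiconformal constant).

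The main obstacle is that $[h]$, unlike a pseudo-Anosov class, may be reducible and thus may fix a simple closed curve, so the step of \cite{dilatation} in which one concludes that $\phi$ cannot permute a finite collection of short curves is not directly available. I see two ways around this. The first: an infinite-order reducible element of $\mathcal{I}(M)$ has a power that is a nontrivial multitwist along separating curves and bounding pairs (possibly together with pseudo-Anosov pieces on a complementary subsurface); a multitwist about a curve $c$ realized by a $K^n$-quasiconformal map forces an embedded annulus around $c$ of conformal modulus comparable to $1/\log K$, hence forces $c$ to be extremely short, contradicting the universal lower bound on the systole of a $K$-quasiconformally homogeneous closed surface (cf.\ \cite{bonfert, qchom}), while any pseudo-Anosov pieces are handled by a relative version of the Farb--Leininger--Margalit estimate on a subsurface. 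The second, cleaner route is to strengthen the proposition so that transitivity of $\mathcal{F}$ is used to compose elements and escape any fixed reduction system, directly producing a $K^n$-quasiconformal pseudo-Anosov element of $\mathcal{I}(M)$, to which Proposition~\ref{prop2.6} applies verbatim. Establishing one of these with constants depending only on $K$ --- in particular verifying that the composition really can be made to escape every reduction system with dilatation controlled solely by $K$ --- is the crux of the argument.
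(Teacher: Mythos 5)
Your high-level strategy --- adapt the Farb--Leininger--Margalit argument for Proposition \ref{prop2.6}, with the quasiconformal length distortion of Lemma \ref{lemma2.3qchom} standing in for the pseudo-Anosov dilatation --- matches the paper, and you correctly identify the central obstacle: a homotopically nontrivial Torelli class need not be pseudo-Anosov, so it may fix the isotopy class of the very curve the argument needs to move. But you do not close that gap, and the two routes you sketch are not the ones that work here. The paper's solution (Proposition \ref{existence}) is an elementary area argument that uses transitivity directly: fix the shortest geodesic $c$ and a nearby geodesic $d$; if every $f\in\mathcal{F}$ fixed $[c]$ or $[d]$, then the $\mathcal{F}$-orbit of a point near both curves would stay inside $\delta(K)$-collars of $c$ and $d$ (Proposition \ref{geoNBH}), so $S$ would be covered by two collars of width $2\delta(K)$ about curves of length at most $2\log(4g-2)$ (Proposition \ref{shortestcurve}); the resulting area bound is incompatible with $\mathrm{Area}(S)=4\pi(g-1)$ once $K$ is close to $1$. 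This produces, for $K\le K_0$, a single $f\in\mathcal{F}$ moving \emph{both} $[c]$ and $[d]$ --- and it is essential that the moved curve is the systole, since the final contradiction in both cases is $|c|\le|a\cup b|$; your $f_0$ with $[f_0]\neq 1$ only moves \emph{some} curve, which may be long, and then the length comparison yields nothing.

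Your proposed repairs are each a genuine gap rather than a proof. Route (a) requires a quantitative lower bound on the dilatation of quasiconformal representatives of multitwists in terms of the lengths of the twisting curves, plus a ``relative FLM estimate on a subsurface'' for the pseudo-Anosov pieces; neither is established nor standard enough to cite. Route (b) --- composing elements of $\mathcal{F}$ to escape every reduction system and manufacture a pseudo-Anosov Torelli element with dilatation controlled by $K$ alone --- is exactly what you call the crux, and there is no evident mechanism for it (compositions of $K$-quasiconformal maps have unbounded dilatation, and transitivity says nothing about reduction systems). You also miss that the argument needs the chosen map to move not only $c$ but a nearby \emph{separating} geodesic $d_1$ arising from the four-holed-sphere configuration of Lemma \ref{lemma2.4}; this is Case 2 of the paper's proof and is the reason Proposition \ref{existence} and Lemma \ref{cd_dist} are stated for a pair of curves. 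Your opening dichotomy --- reducing the case where all of $\mathcal{F}$ is homotopic to the identity to the strong quasiconformal homogeneity bound of \cite{strongqc} --- is a correct observation, but it does not touch the hard case.
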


A related proof of the result has very recently been given by Vlamis, along with similar results for other subgroups of MCG($M$) in \cite{vlamis}. It would be interesting to find an actual value for $K_T$, and perhaps exhibit Torell-$K$-quasiconformally homogeneous surfaces with minimal $K$. In addition to bettering our estimate for $K_T$, proving the existence of a bound on $K$ for the general case of non-Torelli maps is of course still an important open question. Should it be found that such a bound must exist, another interesting result may be a comparison between the value of this bound and our bound $K_T$. We may also wish to seek more specific details on which types of surfaces can be Torelli-$K$-quasiconformally homogeneous with small values of $K$, and if we can obtain stricter bounds for different types of surfaces.  

{\bf Acknowledgements:} The author wishes to thank Professor Vladimir Markovic for providing an interesting project and for his effective mentoring. This work was done under a Ryser Summer Undergraduate Research Fellowship at Caltech. 

\section{Preliminary Notions} \label{s1}

First, we will introduce some relevant concepts, definitions, and lemmas which will be used throughout the paper. 

\subsection{Definitions}

Let $M$ be a closed hyperbolic Riemann surface of genus $g \geq 0$. Let $c$ be a shortest geodesic on $M$. The injectivity radius $\iota(M)$ is the infimum over all $p \in M$ of the largest radius for which the exponential map at $p$ is injective (see \S2.1 of \cite{qchom}). In particular, $|c| \geq 2\iota(M)$, where $|c|$ denotes the length of $c$. 

Let $\gamma\subset M$ be a closed curve. We denote by $[\gamma]$ its homotopy class. Recall that in any homotopy class, there exists a unique geodesic whose length bounds the length of all elements of $[\gamma]$ from below (see e.g. \cite{thurston}). We define the geometric intersection number of two closed curves $a, b \subset M$ by 
\begin{equation}
i(a, b) = \min \# \{ \gamma \cap \gamma' \},
\end{equation}
where the minimum is taken over all closed curves $\gamma, \gamma' \subset M$ with $[\gamma] = a$ and $[\gamma'] = b$ (as defined in \cite{qchom}). From the discussion on page 804 of \cite{dilatation}, the intersection number of a pair of homologous curves must be even. 

Next, let $X$ and $Y$ be complete metric spaces. A map $f: X \rightarrow Y$ is said to be a \textit{$K$-quasi-isometry} if for some $R>0$ we have
$$
R+Kd(x,x') \geq d(f(x), f(x')) \geq \frac{d(x,x')}{K} - R
$$
for all $x, x' \in X$ (see \cite{ratcliffe}). A \textit{quasi-geodesic} in a metric space $X$ is a quasi-isometric map
$$
\gamma: [a,b] \rightarrow X.
$$
It is known that the image of a geodesic under a quasiconformal homeomorphism is a quasi-geodesic (Theorem 5.1 of \cite{epstein}), and that a quasi-geodesic is within a bounded distance of a unique hyperbolic geodesic. 

\subsection{Previous Results}

Let $M$ be as above, and recall that the Torelli group of $M$ is denoted $\mathcal{I}(M)$. First, we have Lemmas 2.2, 2.3, and 2.4 respectively from \cite{dilatation}. These will be instrumental in our final proof in Section \ref{mainproof}. 

\begin{lemma} \label{lemma2.2}
Suppose that $[f] \in \mathcal{I}(M)$, that $c$ is a separating curve, and that $[f(c)] \neq [c]$. Then $i(f(c), c) \geq 4$. 
\end{lemma}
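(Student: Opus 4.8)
I want to prove Lemma \ref{lemma2.2}: if $[f]\in\mathcal{I}(M)$, $c$ is separating, and $[f(c)]\neq[c]$, then $i(f(c),c)\geq 4$. The first thing to recall is that $c$ and $f(c)$ are homologous: since $[f]$ acts trivially on $H_1(M;\mathbb{Z})$, we have $[f(c)]=[c]$ in homology even though they differ as free homotopy (conjugacy) classes. By the remark from page 804 of \cite{dilatation} quoted in the excerpt, the geometric intersection number of two homologous curves is always even, so $i(f(c),c)\in\{0,2,4,6,\dots\}$. It therefore suffices to rule out $i(f(c),c)=0$ and $i(f(c),c)=2$.

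The case $i(f(c),c)=0$. Here $f(c)$ and $c$ are disjoint (after isotopy). Since $c$ is separating, $M\setminus c$ has two components $A$ and $B$; being homologous to $c$ and disjoint from it, $f(c)$ lies entirely in the closure of one of them, say $A$, and it is homologous to $c=\partial A$ inside $A$ (up to the relevant sign/orientation bookkeeping). A curve in a compact surface with boundary that is homologous to the full boundary must be either isotopic to that boundary component or null-homotopic. But $f(c)$ cannot be null-homotopic (it is the image of the essential curve $c$ under a homeomorphism), and if it were isotopic to $c$ we would contradict $[f(c)]\neq[c]$. Hence $i(f(c),c)\neq 0$. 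I expect this step to go smoothly, modulo stating cleanly the ``homologous to the boundary implies isotopic to it'' fact for subsurfaces, which is standard (cf. \cite{primer}).

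The case $i(f(c),c)=2$ is the heart of the argument and the main obstacle. Realize $c$ and $c'=f(c)$ as geodesics meeting in exactly two points $x,y$. These two points divide $c$ into two arcs $c_1,c_2$ and divide $c'$ into two arcs $c'_1,c'_2$. Orient both curves; because they are homologous, the algebraic intersection number is zero, so the two geometric intersections at $x$ and $y$ have opposite signs. The plan is to build from the four arcs a curve $\delta$ of the form $c_i\cup c'_j$ whose algebraic self-description shows it is homologous either to $0$ or to $c$, and then to use the sign information to derive a contradiction with $[f(c)]\neq [c]$ — essentially, one of the two ways of resolving the bigon-like configuration exhibits $c'$ as homotopic to $c$. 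Concretely, I would consider the closed curve obtained by following $c_1$ then $c'_1$ (say): computing its class in $H_1(M;\mathbb{Z})$ using that $[c']=[c]$ and the opposite-sign condition shows a sub-arc relation forcing $c'$ to be freely homotopic to $c$, contradicting the hypothesis. The delicate points are: checking that the relevant concatenations are actually essential simple closed curves (or handling the non-simple/non-essential degenerations separately), and being careful that ``separating'' is used — on a non-separating curve the analogous count can be $2$, so the argument must genuinely invoke that $c=\partial A$ bounds. I would organize this as: (i) set up the arc decomposition and orientations; (ii) use $i_{\mathrm{alg}}=0$ to fix the sign pattern; (iii) form the candidate homotopy/homology and push it through, using separability to conclude. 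Since the quoted source \cite{dilatation} proves exactly this as their Lemma 2.2, I would follow their argument; the only real work is verifying each degenerate sub-case does not slip through.
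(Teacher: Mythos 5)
First, a point of comparison: the paper does not prove this lemma at all --- it is imported verbatim as Lemma 2.2 of \cite{dilatation}, so there is no internal argument to measure your proposal against. Judged on its own, your reduction is right to start with: since $c$ is separating it is null-homologous, hence so is $f(c)$, and the evenness of $i(f(c),c)$ leaves only the cases $0$ and $2$ to exclude. But your treatment of the case $i(f(c),c)=0$ rests on a false claim. It is not true that a simple closed curve in a compact subsurface $A$ that is null-homologous (``homologous to the full boundary'') must be isotopic to $\partial A$ or null-homotopic. Concretely, let $M$ have genus $3$ and let $c$ cut off a genus-one piece $B$ and a genus-two piece $A$; inside $A$ there is an essential separating curve of $M$ cutting off a genus-one subsurface of $A$. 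That curve is disjoint from $c$, null-homologous, and not isotopic to $c$, so your dichotomy fails and the case $i=0$ is not closed. The hypothesis you are not using is that $[f]$ acts trivially on \emph{all} of $H_1(M;\mathbb{Z})$, not merely on the class of $c$ (which carries no information, being zero). The separating curve $c$ induces a direct-sum splitting $H_1(M)=H_1(A)\oplus H_1(B)$, and $f$ must carry this splitting to the one induced by $f(c)$ while fixing every homology class; if $f(c)$ were disjoint from and non-isotopic to $c$, the two splittings would be visibly different (one summand would be properly contained in the other's image), a contradiction. That is the missing idea.

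The case $i(f(c),c)=2$, which you yourself identify as the heart of the matter, is not actually carried out: the passage ``one of the two ways of resolving the bigon-like configuration exhibits $c'$ as homotopic to $c$'' is a hope rather than an argument, and the closing sentence defers to \cite{dilatation}. Note that here too the information $[f(c)]=[c]=0$ in $H_1(M;\mathbb{Z})$ is too weak on its own --- the same splitting-of-homology mechanism (applied to the complementary subsurfaces of $c$ and of $f(c)$, or to the four-holed-sphere neighborhood of $c\cup f(c)$ as in the setup of Lemma \ref{lemma2.4}) is what forces the contradiction with $f$ being Torelli. As written, the proposal establishes only the evenness reduction; both of the substantive exclusions ($i=0$ and $i=2$) are either incorrect or absent.
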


\begin{lemma} \label{lemma2.3}
Suppose that $[f] \in \mathcal{I}(M)$, that $c$ is a nonseparating curve, and that $[f(c)] \neq [c]$. Then at least one of $i(f(c), c)$ and $i(f^2(c), c)$ is at least 2. 
\end{lemma}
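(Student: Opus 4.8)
The plan is to argue by contradiction, exploiting the parity observation recorded above: since $[f]\in\mathcal{I}(M)$, the curves $f(c)$ and $f^2(c)$ are each homologous to $c$, so both $i(f(c),c)$ and $i(f^2(c),c)$ are even. Thus the statement is equivalent to ruling out the possibility that both numbers vanish, and I will show that $i(f(c),c)=i(f^2(c),c)=0$ forces $[f(c)]=[c]$, contrary to hypothesis. Assuming both intersection numbers are zero, I first realize $c$, $f(c)$, and $f^2(c)$ as pairwise disjoint simple closed curves; note that $[f(c)]\neq[c]$, together with applying $f^{-1}$, also gives $[f(c)]\neq[f^2(c)]$. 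Since $c$ and $f(c)$ are disjoint, homologous, nonseparating and non-isotopic, the $1$-cycle $c\cup f(c)$ is separating, and $M\setminus(c\cup f(c))$ has exactly two components $S$ and $S'$, each with $\partial S=\partial S'=c\cup f(c)$.

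The heart of the argument is a monotonicity statement: $f$ pushes $c$ across $f(c)$ in a well-defined direction. To make this precise I orient $c$ and transport the orientation by $f$; because $f$ is orientation-preserving and acts trivially on $H_1(M;\mathbb{Z})$, the classes $[c]=[f(c)]=[f^2(c)]$ all agree. Orienting $S$ by the ambient orientation and computing induced boundary orientations, the relation $[\partial S]=0$ forces $\partial S=f(c)-c$ (this is the choice that distinguishes $S$ from $S'$), and applying $f_*$ gives $\partial f(S)=f^2(c)-f(c)$. The curve $f(c)$ therefore receives opposite induced orientations from $S$ and from $f(S)$, which means $S$ and $f(S)$ lie on opposite sides of $f(c)$. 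Tracking where $f^2(c)=\partial f(S)$ lands, in $S$ or in $S'$, and using that $f(S)$ approaches $f(c)$ from the $S'$-side, I plan to conclude in each case that one of the two regions $R\in\{S,S'\}$ satisfies $f(R)\subseteq M\setminus\mathrm{int}(R)$; that is, $f(R)$ and $R$ have disjoint interiors.

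With such an $R$ in hand the contradiction with the Torelli condition is quick. If $R$ is an annulus then $c$ and $f(c)$ cobound an annulus and are isotopic, immediately contradicting $[f(c)]\neq[c]$. Otherwise $R$ has genus at least one (its boundary has exactly two components), so it contains simple closed curves $a,b\subset R$ with algebraic intersection number $\hat{i}(a,b)=\pm 1$. Then $f(a)\subset f(R)$ is disjoint from $b\subset R$, so their algebraic intersection number vanishes; but algebraic intersection depends only on homology classes, and $[f]\in\mathcal{I}(M)$ gives $[f(a)]=[a]$, whence $\hat{i}(f(a),b)=\hat{i}(a,b)=\pm 1\neq 0$. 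This contradiction completes the proof.

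I expect the monotonicity and side analysis to be the main obstacle. The delicate point is that $f(c)$ is nonseparating, so its complement in $M$ is connected and ``sides of $f(c)$'' only make sense locally along a collar; the region $f(S)$ could a priori wrap around and re-enter $S$ through the curve $c$, which is not part of $\partial f(S)$. The oriented-boundary computation is exactly what pins down the collar behavior of $f(S)$ at $f(c)$ and rules out the wrong configuration, so the statement must be phrased carefully enough to produce a region $R$ with $f(R)$ disjoint from $R$ in every case, including the degenerate possibility $[f^2(c)]=[c]$, in which $f$ simply interchanges $S$ and $S'$.
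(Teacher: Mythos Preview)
The paper does not prove this lemma; it is quoted from Farb--Leininger--Margalit \cite{dilatation}, so there is no in-paper argument to compare against. Your plan is sound and close in spirit to the original proof in \cite{dilatation}. The side analysis you flag as the main obstacle can be completed cleanly once you note two facts. First, after replacing $f$ by an isotopic representative you may assume it literally sends the realized curve $c_0=c$ to $c_1$ and $c_1$ to $c_2$ (any two disjoint ordered pairs of simple closed curves with the same component-wise isotopy types are ambiently isotopic); without this normalization your equation $\partial f(S)=f^2(c)-f(c)$ is not literally about the realized curves. Second, since $[c_0]=[c_1]=[c_2]$ in $H_1(M;\mathbb{Z}/2)$, the complement $M\setminus(c_0\cup c_1\cup c_2)$ has exactly $3+1-1=3$ components, and because each $c_i$ is nonseparating no component can have a single boundary circle; hence the three pieces sit cyclically, say $\partial P_j=c_{j-1}\cup c_j$ (indices mod~$3$). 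Then $S=P_1$, $S'=P_2\cup_{c_2}P_3$, and your oriented-boundary computation forces $f(S)$ to be the component of $M\setminus(c_1\cup c_2)$ abutting $c_1$ from the $S'$-side, namely $P_2$, which is disjoint from $S=P_1$. (If $c_2\subset S$ rather than $S'$, the same reasoning gives $f(S')\subset S$, so take $R=S'$.) Your endgame with $\hat{i}(a,b)=\pm1$ is then correct as written; note that $R$ is never an annulus since $c$ and $f(c)$ are non-isotopic, so that branch of your argument is vacuous.
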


\begin{lemma} \label{lemma2.4}
 Suppose $c$ and $c'$ are homologous nonseparating curves with $i(c, c') = 2$. Suppose that $d, d', e,$ and $e'$ are the boundary components of a 4-holed sphere as shown in Figure 1. Then $d$ and $d'$ are separating in $M$, and $[e] = -[e] = [c] = [c']$ in $H_1(M;\mathbb{Z})$. 
\end{lemma}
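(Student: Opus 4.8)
The plan is to deduce everything from two facts: first, that on a closed orientable surface a simple closed curve is separating if and only if it is null-homologous in $H_1(M;\mathbb{Z})$; and second, that because $c$ and $c'$ are homologous their algebraic intersection number vanishes, so the two points of $c\cap c'$ carry opposite signs. (I read the final relation as the intended $[e]=-[e']=[c]=[c']$; the literal $[e]=-[e]$ cannot hold, since it would force $2[e]=0$ and hence $[e]=0$ in the torsion-free group $H_1(M;\mathbb{Z})$, contradicting that $e$ is assigned the class of the nonseparating curve $c$.) With these in hand, proving that $d$ and $d'$ are separating reduces to the purely homological claim $[d]=[d']=0$, and the statement about $e,e'$ is itself homological; so the whole lemma becomes a computation of the four boundary classes in terms of $[c]$ and $[c']$.

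First I would fix the configuration. Since $i(c,c')=2$ and the two intersection points have opposite signs, the combinatorial type of $c\cup c'$ is determined up to homeomorphism, and is the one drawn in Figure 1; a regular neighborhood of $c\cup c'$ retracts onto a graph with two vertices and four edges, hence has Euler characteristic $-2$, confirming that the relevant piece is a sphere with the four boundary components $d,d',e,e'$. Next, reading Figure 1, I would write each boundary curve as the concatenation of subarcs of $c$ and $c'$ between the two intersection points, and thereby express $[d],[d'],[e],[e']$ in the first homology of the subsurface in terms of the classes carried by these arcs. The orientations supplied by the opposite-sign condition make $e$ and $e'$ trace $c$ and $c'$ with opposite orientations, giving $[e]=[c]$ and $[e']=-[c']$, while $d$ and $d'$ trace loops built from one arc of $c$ together with one arc of $c'$ run backwards, so that $[d]$ and $[d']$ are expressed through the difference of $[c]$ and $[c']$. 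Imposing the hypothesis $[c]=[c']$ then collapses these differences, yielding $[d]=[d']=0$ and $[e]=-[e']=[c]=[c']$; the separating-iff-null-homologous criterion converts $[d]=[d']=0$ into the assertion that $d$ and $d'$ separate $M$.

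The step I expect to be the main obstacle is the careful bookkeeping of orientations and arc decompositions when reading off the four boundary classes from Figure 1, together with the verification that the opposite-sign condition really does pin the configuration down to a single homeomorphism type, so that no case is overlooked. Once the arcs and their orientations are fixed correctly, the passage from the subsurface classes to the conclusion is immediate from the single relation $[c]=[c']$, and the identification of null-homologous curves with separating curves is standard for closed orientable surfaces.
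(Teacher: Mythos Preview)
The paper does not actually prove this lemma: it is quoted verbatim as Lemma~2.4 of Farb--Leininger--Margalit \cite{dilatation}, alongside Lemmas~\ref{lemma2.2} and~\ref{lemma2.3}, and no argument is supplied here. So there is no proof in the present paper to compare your proposal against.

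That said, your outline is the standard and correct route to the result, and it matches in spirit how the lemma is argued in \cite{dilatation}: the vanishing of the algebraic intersection number $\hat\imath(c,c')$ forces the two geometric intersections to carry opposite signs, which pins down the combinatorial type of $c\cup c'$; a regular neighborhood is then a four-holed sphere whose boundary classes are read off from the arc decomposition, and the relation $[c]=[c']$ collapses the two ``mixed'' boundary classes to zero while leaving the other two equal to $\pm[c]$. Your identification of the typo ($[e]=-[e']$ rather than $[e]=-[e]$) is also correct. The one place where you should be a bit more explicit is in passing from homology of the four-holed sphere to homology of $M$: the classes you compute for $d,d',e,e'$ live a priori in $H_1$ of the subsurface, and you need the inclusion-induced map to carry the arc relations to $H_1(M;\mathbb{Z})$---this is immediate once you note that the loops $c$ and $c'$ themselves lie in the subsurface and represent the given classes there, but it is worth saying.
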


\begin{figure}[htbp]
	\centering
	\includegraphics[width = .6 \textwidth] {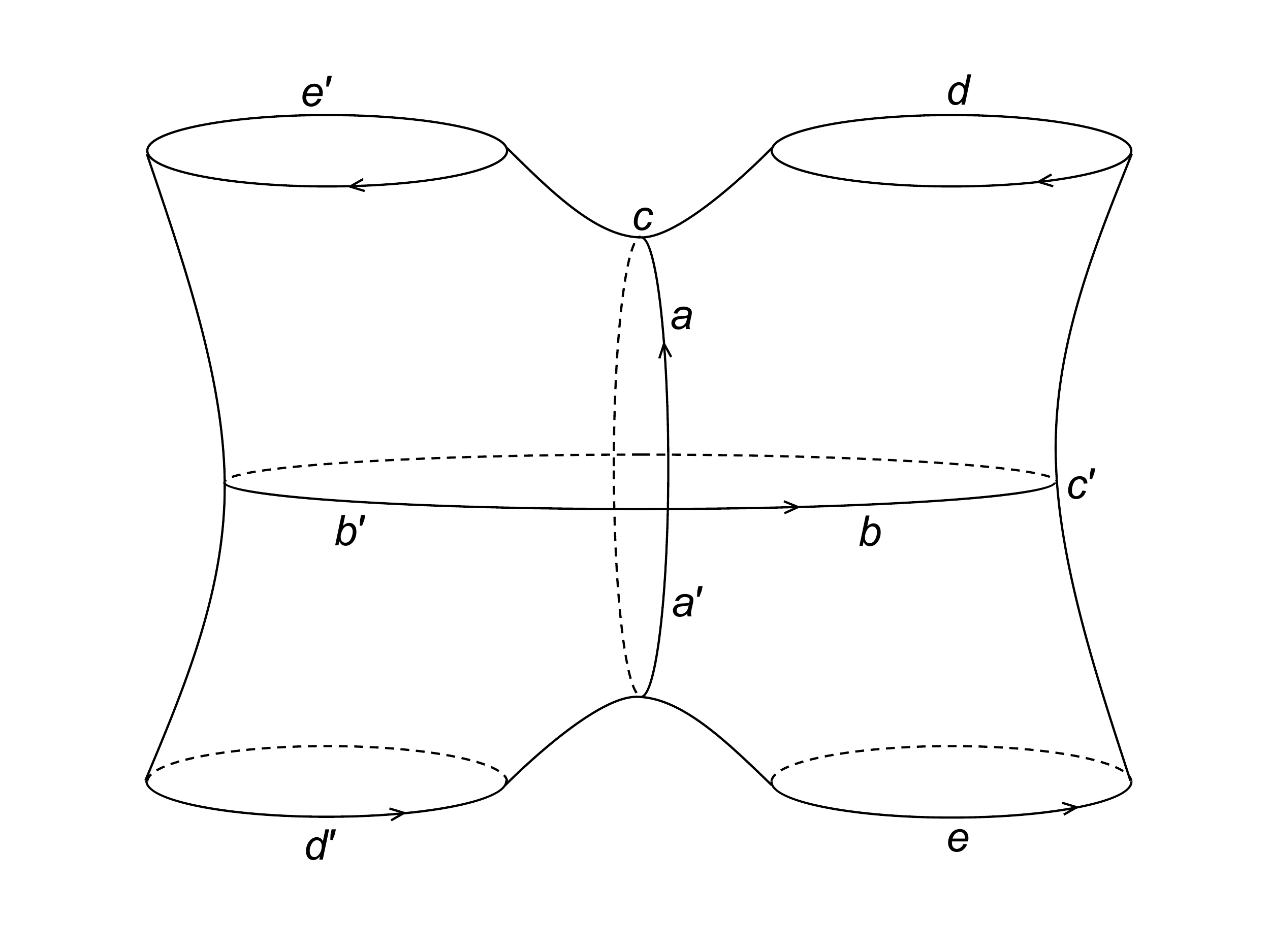}
	\caption{Illustration of Lemma \ref{lemma2.4}, with $c$ and $c'$ homologous with intersection number 2. The labels $a, b, a', $ and $ b'$ are used in the proof of Theorem \ref{main}. (Adapted from \cite{dilatation})}
	\label{4holesph}
\end{figure}

Next, we have Lemmas 2.2 and 2.3 from \cite{qchom}, respectively:

\begin{lemma} \label{lemma2.2qchom}
Let M be a K-quasiconformally homogeneous hyperbolic surface and $\iota(M)$ its injectivity radius. Then $\iota(M)$ is uniformly bounded from below (for K bounded from above) and $\iota(M) \rightarrow \infty$ for $K \rightarrow 1$. 
\end{lemma}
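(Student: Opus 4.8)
The plan is to exploit a dichotomy: by homogeneity a single short closed geodesic forces the injectivity radius to be small at \emph{every} point of $M$, while no closed hyperbolic surface is uniformly thin. The quasiconformal input I would record first is a uniform modulus of continuity for the hyperbolic metric: every $K$-quasiconformal self-homeomorphism $f$ of $\mathbb{H}^2$ satisfies
\[
d_{\mathbb{H}^2}\big(f(x),f(y)\big)\le\eta_K\big(d_{\mathbb{H}^2}(x,y)\big),
\]
where $\eta_K$ depends only on $K$, is increasing, has $\eta_K(0)=0$, is finite near $0$, and (in sharp form) satisfies $\eta_K(t)\to t$ as $K\to1$. This follows from the standard distortion estimates for normalized $K$-quasiconformal self-maps of the disk (Mori's theorem and its sharpenings), together with the fact that pre- and post-composition by hyperbolic isometries alters neither $K$ nor hyperbolic distances, so one may always reduce to the normalized situation.

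Next comes the propagation step. Write $M=\mathbb{H}^2/\Gamma$; using compactness of $M$, pick $p$ with $\iota_M(p)=\iota(M)$, and for a lift $\tilde p$ choose $\gamma_0\in\Gamma\setminus\{1\}$ with $d(\tilde p,\gamma_0\tilde p)=2\iota(M)$. For arbitrary $q\in M$, pick $f\in\mathcal{F}$ with $f(p)=q$ and a lift $\tilde f$ with $\tilde f(\tilde p)=\tilde q$; since $f$ descends to a self-homeomorphism of $M$, conjugation by $\tilde f$ preserves $\Gamma$, so $\gamma_q:=\tilde f\gamma_0\tilde f^{-1}\in\Gamma\setminus\{1\}$ and
\[
d(\tilde q,\gamma_q\tilde q)=d\big(\tilde f(\tilde p),\tilde f(\gamma_0\tilde p)\big)\le\eta_K\big(2\iota(M)\big),
\]
hence $\iota_M(q)\le\tfrac12\eta_K(2\iota(M))$ for every $q\in M$. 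On the other hand it is classical (collar lemma plus Gauss--Bonnet: a closed hyperbolic surface equal to its $\delta$-thin part for $\delta$ below the Margulis constant would have area $O(\delta(g-1))<4\pi(g-1)$) that some universal $\delta_*>0$ satisfies $\sup_q\iota_M(q)>\delta_*$ for every closed hyperbolic surface. Combining, $\delta_*<\tfrac12\eta_K(2\iota(M))$, i.e. $\iota(M)>\tfrac12\eta_K^{-1}(2\delta_*)$, which is positive and bounded away from $0$ for $K$ bounded above. This proves the first assertion (for non-closed $M$ one argues similarly, replacing the area estimate by a normal-families argument).

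The bound just obtained does not yield $\iota(M)\to\infty$ as $K\to1$, since $\eta_K^{-1}(2\delta_*)\to2\delta_*$; for that I would argue by contradiction and compactness. Given $K_n$-quasiconformally homogeneous surfaces $M_n=\mathbb{H}^2/\Gamma_n$ with $K_n\to1$ and $\iota(M_n)\le C$, normalize so that $o\in\mathbb{H}^2$ lies over a point of minimal injectivity radius, producing $\gamma_n\in\Gamma_n$ with $d(o,\gamma_n o)\le2C$. Passing to a subsequence, $\Gamma_n$ converges geometrically to a group $\Gamma_\infty$ containing the nontrivial element $\gamma_\infty=\lim\gamma_n$, and for each $\tilde q$ in a fixed ball the normalized lifts of the homogeneity maps --- being $K_n$-quasiconformal --- converge to a $1$-quasiconformal, hence conformal, hence isometric, self-map of $\mathbb{H}^2$ normalizing $\Gamma_\infty$ and carrying $o$ into the $\Gamma_\infty$-orbit of $\tilde q$. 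Then $\Gamma_\infty$ is normalized by isometries acting transitively modulo $\Gamma_\infty$, which is incompatible with $\Gamma_\infty$ being a nontrivial discrete group; so $\Gamma_\infty=\{1\}$, contradicting the survival of $\gamma_\infty$.

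I expect this last step to be the main obstacle: making the geometric-convergence argument rigorous --- controlling the $\Gamma_n$, handling the possibility that $\Gamma_\infty$ fails to be discrete, and correctly formulating ``$1$-quasiconformally homogeneous $\Rightarrow$ isometrically homogeneous $\Rightarrow\mathbb{H}^2$'' in the limit. A more quantitative alternative would be to strengthen the propagation step so that it constrains the geometry of a large metric ball around $q$ rather than a single short loop, and feed that into an area estimate whose lower bound on $\iota(M)$ blows up as $K\to1$; but the compactness route is the cleaner to state. The ingredients behind the first assertion --- the uniform hyperbolic modulus of continuity, the loop-propagation computation, and the classical fact that no closed hyperbolic surface is uniformly thin --- are all routine.
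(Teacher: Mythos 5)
This lemma is not proved in the paper at all: it is imported verbatim from Kwakkel--Markovic \cite{qchom} (resting in turn on the arguments of \cite{bonfert}), so there is no in-paper proof to compare against. Your proposal essentially reconstructs the standard argument from those sources and is sound. The first assertion is complete as written: the hyperbolic modulus of continuity $\eta_K$ is the Hersch--Pfluger/Mori distortion estimate for $K$-quasiconformal self-maps of the disk, the conjugation step correctly propagates a shortest loop at $p$ to a nontrivial loop of length at most $\eta_K(2\iota(M))$ at every point, and the nonemptiness of the thick part of a closed hyperbolic surface is the Margulis lemma (your collar-area count also works). For the second assertion you rightly observe that the quantitative bound degenerates as $K\to 1$ and that a normal-families/geometric-limit argument is needed; your sketch is the correct one, and the two worries you flag are easier than you fear. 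First, $\Gamma_\infty$ is automatically discrete, torsion-free, and purely hyperbolic, because every nontrivial $\gamma\in\Gamma_n$ displaces \emph{every} point of $\mathbb{H}^2$ by at least $2\iota(M_n)$, which your first assertion bounds below uniformly; this same bound shows $\gamma_\infty\neq 1$ and survives passage to the Chabauty limit. Second, the transitivity contradiction is standard: the normalizer of a nontrivial discrete purely hyperbolic subgroup of $\mathrm{Isom}(\mathbb{H}^2)$ is discrete in the nonelementary case and stabilizes an axis in the cyclic case, so it cannot contain isometries carrying $o$ to every point of $\mathbb{H}^2$. With those details written out the argument is complete.
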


In particular, if $c$ is the shortest curve on $M$, then $|c| \rightarrow \infty$ as $K \rightarrow 1$. 

\begin{lemma} \label{lemma2.3qchom}
Let $\gamma$ a simple closed geodesic in $M$. Let $f: M\rightarrow M$ be a K-quasiconformal homeomorphism and $\gamma '$ the simple closed geodesic homotopic to $f(\gamma)$. Then $\frac{1}{K}|\gamma| \leq |\gamma '| \leq K|\gamma|$. 
\end{lemma}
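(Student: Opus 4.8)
The plan is to lift everything to the universal cover and translate the statement about geodesic lengths into a statement about the conformal moduli of cyclic quotient annuli, for which $K$-quasiconformality gives a sharp multiplicative distortion bound. Write $M = \mathbb{H}^2/\Gamma$ with $\Gamma \cong \pi_1(M)$ a cocompact Fuchsian group. The simple closed geodesic $\gamma$ is freely homotopic to a primitive conjugacy class in $\Gamma$, represented by a hyperbolic element $g \in \Gamma$ whose translation length $\tau(g) := \inf_{z \in \mathbb{H}^2} d(z, gz)$ is exactly $|\gamma|$. The whole argument rests on computing the modulus of the annulus $\mathbb{H}^2/\langle g\rangle$ in terms of $\tau(g)$.

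First I would lift $f$ to a $K$-quasiconformal homeomorphism $\tilde f : \mathbb{H}^2 \to \mathbb{H}^2$. Any such lift is equivariant with respect to the induced automorphism $f_* : \Gamma \to \Gamma$: since both $\tilde f$ and $\tilde f \circ g$ cover $f$, they differ by a deck transformation, giving $\tilde f \circ g = f_*(g) \circ \tilde f$ for every $g \in \Gamma$. Because $[\gamma'] = [f(\gamma)] = f_*([\gamma])$, the geodesic $\gamma'$ corresponds to the conjugacy class of $f_*(g)$, and hence $|\gamma'| = \tau(f_*(g))$. The equivariance shows in particular that $\tilde f$ conjugates the infinite cyclic group $\langle g\rangle$ to $\langle f_*(g)\rangle$, so it descends to a $K$-quasiconformal homeomorphism $\hat f$ between the quotient annuli $A = \mathbb{H}^2/\langle g\rangle$ and $A' = \mathbb{H}^2/\langle f_*(g)\rangle$.

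The crucial computation is that the cyclic quotient of $\mathbb{H}^2$ by a hyperbolic isometry of translation length $\tau$ is a finite-modulus annulus with $\operatorname{mod} = \pi/\tau$. Conjugating so that $g(z) = e^{\tau(g)}z$ on the upper half-plane and applying $\zeta = \log z$ identifies $A$ conformally with the flat cylinder $\{\,0 < \operatorname{Im}\zeta < \pi\,\}/(\zeta \sim \zeta + \tau(g))$, whose modulus is the ratio of height $\pi$ to circumference $\tau(g)$, i.e. $\operatorname{mod}(A) = \pi/|\gamma|$, and likewise $\operatorname{mod}(A') = \pi/|\gamma'|$. Invoking the standard Grötzsch-type quasi-invariance of modulus under a $K$-quasiconformal map, $\tfrac{1}{K}\operatorname{mod}(A) \le \operatorname{mod}(A') \le K\operatorname{mod}(A)$, and substituting the two moduli cancels the common factor $\pi$ to give $\tfrac{1}{K}\cdot\tfrac{1}{|\gamma|} \le \tfrac{1}{|\gamma'|} \le K\cdot\tfrac{1}{|\gamma|}$; taking reciprocals yields exactly $\tfrac{1}{K}|\gamma| \le |\gamma'| \le K|\gamma|$.

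The substantive inputs here are all standard (existence of the equivariant lift, the identity $\operatorname{mod}(\mathbb{H}^2/\langle g\rangle) = \pi/\tau(g)$, and quasi-invariance of modulus), so the only real care needed is in the bookkeeping: verifying that the lift descends to the correct pair of cyclic-quotient annuli, and checking that the modulus convention produces the clean purely multiplicative constant. I expect this last point to be the most delicate, since the cruder route via the fact that $\tilde f$ is a quasi-isometry sending geodesics to quasi-geodesics (Theorem 5.1 of \cite{epstein}) would only deliver a bound carrying an additive error term; it is the modulus argument that isolates the sharp multiplicative factor $K$ claimed in the lemma (cf. Lemma 2.3 of \cite{qchom}).
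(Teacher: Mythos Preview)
Your argument is correct: lifting to an equivariant $K$-quasiconformal map between the cyclic-cover annuli $\mathbb{H}^2/\langle g\rangle$ and $\mathbb{H}^2/\langle f_*(g)\rangle$, computing $\operatorname{mod}(\mathbb{H}^2/\langle g\rangle)=\pi/\tau(g)$, and invoking the Gr\"otzsch modulus bound is exactly the standard route to the sharp multiplicative inequality. Note, however, that the paper does not supply its own proof of this lemma at all---it simply quotes it as Lemma~2.3 of \cite{qchom}---so there is no in-paper argument to compare against; your write-up is a valid self-contained substitute for that citation.
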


This will allow us to bound the lengths of geodesics under $K$-quasiconformal maps in terms of their pre-images. We also recall the following classical result: 

\begin{proposition} \label{geoNBH}
There exists a function $\delta(K) > 0$ such that $\delta(K) \rightarrow 0$ as $K \rightarrow 1$, which satisfies the following. Let $f: S_1 \rightarrow S_2$ be a $K$-quasiconformal map between two hyperbolic Riemann surfaces and suppose that $\gamma$ is a geodesic on $S_1$. Then $f(\gamma)$ is contained in a $\delta(K)$-neighborhood of the unique geodesic on $S_2$ homotopic to $f(\gamma)$. 
\end{proposition}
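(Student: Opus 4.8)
The plan is to pull everything back to the universal covers $\mathbb{H}^2$, where the statement becomes a quantitative sharpening of the two facts recalled just above: a $K$-quasiconformal map sends geodesics to quasi-geodesics, and a quasi-geodesic fellow-travels a genuine geodesic. The whole content is to check that the constants involved degenerate to the trivial ones as $K\to 1$.

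First I would lift. Since the covering maps $\pi_i:\mathbb{H}^2\to S_i$ are holomorphic and quasiconformality is a local property, $f$ lifts to a $K$-quasiconformal homeomorphism $\tilde f:\mathbb{H}^2\to\mathbb{H}^2$, equivariant under the induced isomorphism $f_*:\pi_1(S_1)\to\pi_1(S_2)$ of deck groups. A geodesic $\gamma\subset S_1$ lifts to a complete geodesic $\tilde\gamma\subset\mathbb{H}^2$ (the axis of a deck transformation when $\gamma$ is closed). Because $\tilde f$ extends to a homeomorphism of $\overline{\mathbb{H}^2}$, the curve $\tilde f(\tilde\gamma)$ has two well-defined ideal endpoints; let $\tilde\sigma$ be the geodesic joining them, which by equivariance projects to the geodesic on $S_2$ homotopic to $f(\gamma)$. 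Since $\pi_2$ is a local isometry, hence $1$-Lipschitz, and $\pi_2\circ\tilde f=f\circ\pi_1$, it suffices to produce a function $\delta(K)>0$ with $\delta(K)\to0$ as $K\to1$ such that $\tilde f(\tilde\gamma)$ lies in the $\delta(K)$-neighborhood of $\tilde\sigma$.

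Next I would track constants. By Theorem 5.1 of \cite{epstein} applied upstairs --- more precisely, by the classical fact that a $K$-quasiconformal self-homeomorphism of $\mathbb{H}^2$ is a $(\lambda(K),\epsilon(K))$-quasi-isometry with $\lambda(K)\to1$ and $\epsilon(K)\to0$ as $K\to1$ --- the restriction of $\tilde f$ to $\tilde\gamma$ is a quasi-isometric embedding with these same constants; equivalently, $\tilde f(\tilde\gamma)$, parametrized by arclength pulled back along $\tilde\gamma$, is a $(\lambda(K),\epsilon(K))$-quasi-geodesic. Invoking the stability of quasi-geodesics (the Morse lemma) in the Gromov-hyperbolic space $\mathbb{H}^2$, a $(\lambda,\epsilon)$-quasi-geodesic with prescribed ideal endpoints lies in the $D(\lambda,\epsilon)$-neighborhood of the geodesic with those endpoints, where $D$ is an explicit function, monotone in each variable, with $D(1,0)=0$. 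Setting $\delta(K):=D(\lambda(K),\epsilon(K))$ gives $\delta(K)\to0$ as $K\to1$ and $\tilde f(\tilde\gamma)\subset\mathcal{N}_{\delta(K)}(\tilde\sigma)$; projecting by $\pi_2$ then completes the argument.

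The main obstacle is the quantitative input above: that the quasi-isometry constants of $\tilde f$ --- in particular the additive constant $\epsilon(K)$ --- may be taken to converge to the values for an isometry as $K\to1$. I would establish this by first normalizing, pre- and post-composing $\tilde f$ with hyperbolic isometries (which change neither the quasi-geodesic constants of $\tilde f(\tilde\gamma)$ nor its Hausdorff distance to $\tilde\sigma$) so that a chosen basepoint $p\in\tilde\gamma$ and its image $\tilde f(p)$ both sit at the center of the disk, and then applying Mori's distortion theorem to the normalized map and to its inverse to bound the distortion of hyperbolic distances by a quantity that degenerates as $K\to1$. The delicate point is the behavior near $\partial\mathbb{H}^2$, where small Euclidean distortions correspond to large hyperbolic ones; there one uses that the boundary extension of a $K$-quasiconformal map is $\eta_K$-quasisymmetric with $\eta_K\to\mathrm{id}$ as $K\to1$. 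Since all of this is standard, in the final write-up I would state this proposition with a reference and include only the reduction to $\mathbb{H}^2$ and the above sketch.
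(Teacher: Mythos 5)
The paper never proves this proposition --- it is recalled as a classical fact, with the only pointers being Theorem 5.1 of \cite{epstein} (image of a geodesic under a quasiconformal map is a quasi-geodesic) and the stability of quasi-geodesics --- so there is no in-paper argument to compare against. Your reduction is the standard and correct route: lift to $\mathbb{H}^2$, use that the lift of $f$ extends to the circle so that $\tilde f(\tilde\gamma)$ has well-defined ideal endpoints, take $\tilde\sigma$ to be the geodesic with those endpoints (which by equivariance projects to the geodesic homotopic to $f(\gamma)$), and push the fellow-traveling estimate down by the local isometry $\pi_2$.

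The one step where your sketch asserts more than the tools you invoke actually deliver is the Morse lemma. The standard stability theorem for quasi-geodesics in a $\delta$-hyperbolic space gives a constant $D(\lambda,\epsilon,\delta)$ that does \emph{not} tend to $0$ as $(\lambda,\epsilon)\to(1,0)$; the usual proofs produce constants bounded below by a multiple of $\delta$ even when the input is an honest geodesic. The refined statement you need --- that in $\mathbb{H}^2$ a complete $(\lambda,\epsilon)$-quasi-geodesic with prescribed ideal endpoints lies within $D(\lambda,\epsilon)$ of the geodesic joining them, with $D(\lambda,\epsilon)\to 0$ --- is true, but it is exactly where the content of the proposition sits, and you assert it rather than prove it. Establishing it requires a separate argument: for the additive constant one can use the $\mathbb{H}^2$-specific detour estimate that a point $p$ with $d(x,p)+d(p,y)\le d(x,y)+C$ lies within $\cosh^{-1}(e^{C/2})$ of the geodesic $[x,y]$ (a bound that vanishes with $C$, unlike its $\delta$-hyperbolic analogue), and the multiplicative constant $\lambda>1$ needs an additional bootstrapping step since on a complete quasi-geodesic the naive detour bound grows with the length scale. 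The cleaner fix is to quote a quantitative source directly --- the estimates in \cite{epstein} that the paper itself cites are stated with constants that degenerate as $K\to 1$ --- rather than re-deriving the statement from the generic Morse lemma. The same caveat applies to your claim that the additive quasi-isometry constant $\epsilon(K)\to 0$: you correctly flag this as the main obstacle, and your normalization-plus-Mori outline is plausible, but the behavior near $\partial\mathbb{H}^2$ is precisely the hard part and should likewise be handled by citation to the quantitative literature rather than by sketch.
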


 That is, the image of a geodesic under a $K$-quasiconformal map is contained within a collar of a geodesic, the width of which tends to 0 as $K$ tends to 1. Finally, Proposition 1.16 from \cite{buser} gives us the following: 

\begin{proposition} \label{shortestcurve}
Let $S$ be a closed Riemann surface of genus $g \geq 2$ equipped with its hyperbolic metric. Then the shortest curve $c$ on $S$ has length $|c| \leq 2\log(4g-2)$.
\end{proposition}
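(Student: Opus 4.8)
The plan is to deduce this classical systole estimate from the Gauss--Bonnet theorem together with an elementary ball-packing inequality; this is the standard argument that the length of the shortest closed geodesic on a closed hyperbolic surface is controlled by the genus. First I would record that, by Gauss--Bonnet, a closed hyperbolic surface $S$ of genus $g$ has area $-2\pi\chi(S)=2\pi(2g-2)=4\pi(g-1)$. Now fix an arbitrary point $p\in S$ and let $\rho_0$ be its injectivity radius, i.e.\ the largest $\rho$ for which the open metric ball $B(p,\rho)$ is isometric to a disk of radius $\rho$ in $\mathbb{H}^2$ (one checks, using that $\mathbb{H}^2$ has no conjugate points, that the open ball of radius exactly $\rho_0$ already embeds); by compactness $\rho_0$ is finite and positive. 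Since a hyperbolic disk of radius $\rho$ has area $2\pi(\cosh\rho-1)$, comparing the area of the embedded ball $B(p,\rho_0)$ with the total area of $S$ gives $2\pi(\cosh\rho_0-1)\le 4\pi(g-1)$, hence $\cosh\rho_0\le 2g-1$ and so $\rho_0\le\operatorname{arccosh}(2g-1)$.

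Next I would convert this into a bound on the length of the shortest geodesic. Because $\mathbb{H}^2$ has no conjugate points, the injectivity radius at $p$ equals half the length of the shortest geodesic loop based at $p$; let $\gamma$ be such a loop, so $|\gamma|=2\rho_0$. This loop is homotopically essential: if it were null-homotopic, a lift to the universal cover $\mathbb{H}^2$ would be a geodesic arc from a point to itself, hence constant, a contradiction. Therefore the free homotopy class of $\gamma$ contains a closed geodesic of length at most $|\gamma|=2\rho_0$, and consequently the shortest closed geodesic $c$ on $S$ satisfies $|c|\le 2\rho_0\le 2\operatorname{arccosh}(2g-1)$.

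Finally, I would simplify the right-hand side. Using $\operatorname{arccosh}(x)=\log\bigl(x+\sqrt{x^2-1}\bigr)$ with $x=2g-1$ and the identity $(2g-1)^2-1=4g(g-1)<(2g-1)^2$, one has $\sqrt{x^2-1}<2g-1$, so $\operatorname{arccosh}(2g-1)<\log\bigl(2(2g-1)\bigr)=\log(4g-2)$. Combining the two inequalities gives $|c|<2\log(4g-2)$, which is slightly stronger than the asserted bound.

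I expect the only step requiring genuine care to be the second one, namely justifying that on a hyperbolic surface the injectivity radius at a point is realized by an essential geodesic loop of twice that length: this is precisely where the absence of conjugate points in $\mathbb{H}^2$ enters, and it can be made explicit by observing that the failure of $B(p,\rho_0)$ to embed must be caused by two distinct geodesic segments of length $\rho_0$ running from $p$ to a common point, whose concatenation is the desired loop. The Gauss--Bonnet area formula, the monotonicity of the disk-area function in the radius, and the estimate for $\operatorname{arccosh}$ are all routine.
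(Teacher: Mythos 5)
Your argument is correct and complete: the Gauss--Bonnet area count, the embedded-ball packing bound $2\pi(\cosh\rho_0-1)\le 4\pi(g-1)$, the identification of the injectivity radius at a point with half the length of an essential geodesic loop (valid because $\mathbb{H}^2$ has no conjugate points), and the estimate $\operatorname{arccosh}(2g-1)<\log(4g-2)$ all go through, and in fact yield the strict inequality. The paper does not prove this statement at all --- it imports it as Proposition 1.16 of Buser--Sarnak \cite{buser} --- and your argument is precisely the standard area/packing proof underlying that citation, so there is nothing to reconcile.
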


These two facts will allow us to prove the main proposition in the next section.

\section{Existence of a Suitable $f \in \mathcal{F}$}

Let $S$ be a closed hyperbolic Riemann surface with shortest geodesic $c$. Suppose that $S$ is $K$-quasiconformally homogeneous with transitive family of $K$-quasiconformal maps $\mathcal{F}$. 

In \cite{dilatation}, a lower bound was obtained for the dilatation of pseudo-Anosov maps. Recall the condition in Lemmas \ref{lemma2.2} and \ref{lemma2.3}, that we have some map $f$ such that $[c] \neq [f(c)]$. Pseudo-Anosov maps are always homotopically nontrivial, so the existence of such an $f$ is known a priori. For general $K$-quasiconformal homeomorphisms, this is not necessarily the case, but our proof of Theorem \ref{main} makes use of the aforementioned lemmas with the curve $c$, as well as a nearby geodesic. Thus, we must show that there exists a map $f \in \mathcal{F}$ that sends both $c$ and a neighboring geodesic to curves not homotopic to their preimages for surfaces with sufficiently small $K$. We phrase this as follows: 

\begin{proposition} \label{existence}
There exists a universal constant $K_0 > 1$ such that if $S$ is a $K_0$-quasiconformally homogeneous closed Riemann surface of genus $g \geq 2$, and $\mathcal{F}$ is a transitive family of $K_0$-quasiconformal homeomorphisms of $S$, then if $c$ is the shortest geodesic on $S$ and $d$ another geodesic on $S$ whose length is at most $2|c|$ and such that the distance between $c$ and $d$ on $S$ is at most $\frac{1}{|c|}$, there exists $f \in \mathcal{F}$ such that $[c] \neq [f(c)]$ and $[d] \neq [f(d)]$. 
\end{proposition}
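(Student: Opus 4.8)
The plan is to argue by contradiction: assume that \emph{every} $f\in\mathcal{F}$ satisfies $[f(c)]=[c]$ or $[f(d)]=[d]$, and use transitivity to conclude that $S$ is covered by thin neighborhoods of the geodesics $c$ and $d$, which contradicts the Gauss--Bonnet area $\operatorname{Area}(S)=4\pi(g-1)$ once $K$ is sufficiently close to $1$. To set up the geometry, note that $c$ and $d$ are compact, so we may fix $p\in c$ and $y\in d$ with $d(p,y)=\operatorname{dist}(c,d)\le \tfrac{1}{|c|}$; the point $p$ will serve as the basepoint for transitivity and $y$ as an auxiliary point on $d$ close to $p$.

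The technical heart of the argument is a uniform modulus-of-continuity estimate: I claim there is a function $\delta_2(K)>0$ with $\delta_2(K)\to 0$ as $K\to 1$ such that any $K$-quasiconformal $f\colon S\to S$ satisfies $d(f(p),f(y))\le \delta_2(K)$. To see this, lift $f$ to a $K$-quasiconformal self-map $\tilde f$ of $\mathbb{D}$, choose the lift $\tilde p$ of $p$ to be $0$, post-compose with a hyperbolic isometry so that $\tilde f(0)=0$, and apply the classical distortion bound $|\tilde f(z)|\le 16|z|^{1/K}$ for normalized $K$-quasiconformal self-maps of $\mathbb{D}$ to the nearest lift $\tilde y$ of $y$. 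Since $d(p,y)\le \tfrac{1}{|c|}\le \tfrac{1}{2\iota(S)}$ and, by Lemma~\ref{lemma2.2qchom}, $\iota(S)\to\infty$ as $K\to1$, the hyperbolic distance $d(p,y)$ tends to $0$, so the resulting bound on $d(f(p),f(y))$ tends to $0$ as well. (This is the one genuinely analytic step; everything else is soft.)

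Now comes the covering argument. Fix an arbitrary $q\in S$. By transitivity there is $f\in\mathcal{F}$ with $f(p)=q$, and by the contradiction hypothesis either $[f(c)]=[c]$ or $[f(d)]=[d]$. In the first case Proposition~\ref{geoNBH} places $f(c)$, and hence $q=f(p)$, in the $\delta(K)$-neighborhood $N_{\delta(K)}(c)$. In the second case $f(d)\subset N_{\delta(K)}(d)$, so $f(y)\in N_{\delta(K)}(d)$, and the estimate of the previous paragraph gives $q=f(p)\in N_{\delta(K)+\delta_2(K)}(d)$. As $q$ was arbitrary, $S=N_{\delta(K)}(c)\cup N_{\delta(K)+\delta_2(K)}(d)$. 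The $w$-neighborhood of a closed geodesic of length $\ell$ has area at most $2\ell\sinh w$ (work in Fermi coordinates in the associated hyperbolic cylinder; passing to the quotient surface only decreases area), so using $|d|\le 2|c|$ we get
$$
4\pi(g-1)=\operatorname{Area}(S)\le 2|c|\sinh\!\big(\delta(K)\big)+2|d|\sinh\!\big(\delta(K)+\delta_2(K)\big)\le 6|c|\sinh\!\big(\delta(K)+\delta_2(K)\big).
$$

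Finally, Proposition~\ref{shortestcurve} gives $|c|\le 2\log(4g-2)$, so $4\pi(g-1)\le 12\log(4g-2)\,\sinh(\delta(K)+\delta_2(K))$. Since $\frac{4\pi(g-1)}{12\log(4g-2)}\ge \frac{\pi}{3\log 6}>0$ for every $g\ge 2$, it suffices to choose $K_0>1$ small enough that $\sinh\big(\delta(K_0)+\delta_2(K_0)\big)<\frac{\pi}{3\log 6}$, which is possible because $\delta(K)\to 0$ (Proposition~\ref{geoNBH}) and $\delta_2(K)\to 0$. For any $K\le K_0$ this contradicts the displayed inequality, proving the proposition; note that $K_0$ depends on nothing but the universal functions $\delta$ and $\delta_2$, so it is a universal constant. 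The main obstacle is the second paragraph — obtaining a uniform small-scale distortion bound for the quasiconformal maps in $\mathcal{F}$ — while the remaining steps are routine given the results already quoted.
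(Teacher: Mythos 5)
Your proof is correct and follows essentially the same strategy as the paper's: assume every $f\in\mathcal{F}$ fixes $[c]$ or $[d]$, use transitivity together with Proposition \ref{geoNBH} to conclude that $S$ is covered by thin neighborhoods of $c$ and $d$, and contradict the Gauss--Bonnet area $4\pi(g-1)$ via the Buser--Sarnak bound $|c|\le 2\log(4g-2)$. Your two technical refinements --- invoking Mori's theorem to make the ``nearby points have nearby images'' step quantitative (where the paper only appeals informally to continuity), and using the exact collar area $2\ell\sinh w$ from Fermi coordinates in place of the paper's disk-covering estimate --- are both sound and, if anything, tighter than the original argument.
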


We will prove this in three parts. First, we show that surfaces lacking a $f \in \mathcal{F}$ such that $[f(c)] \neq [c]$ and $[f(d)] \neq [d]$ must be contained in the union of small neighborhoods of $c$ and $d$. Then, we will exhibit a bound on the area of such a surface. Finally, we show that for sufficiently small $K$ this bound cannot hold. 

\begin{claim} \label{neighborhood}
Let $S$ be as in Proposition \ref{existence}. If for all $f \in \mathcal{F}$, we have that $f$ fixes at least one of $[c]$ or $[d]$, then S is contained in the union of a $\delta$-neighborhood of $c$ and a $\delta$-neighborhood of $d$, where $\delta = \delta(K)$ depends on $K$ and $\delta \rightarrow 0$ as $K \rightarrow 1$. 
\end{claim}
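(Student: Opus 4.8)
The plan is to argue by contradiction: suppose that every $f \in \mathcal{F}$ fixes at least one of the homotopy classes $[c]$ or $[d]$. Fix a point $p \in S$ and let $q \in S$ be arbitrary. By transitivity of $\mathcal{F}$, there is some $f \in \mathcal{F}$ with $f(p) = q$. By hypothesis, either $[f(c)] = [c]$ or $[f(d)] = [d]$. Consider the first case. Since $c$ is a geodesic on $S$ and $f$ is $K$-quasiconformal, Proposition \ref{geoNBH} tells us that $f(c)$ lies in a $\delta(K)$-neighborhood of the unique geodesic homotopic to $f(c)$; but that geodesic is $c$ itself (being the unique geodesic in its homotopy class), so $f(c)$ lies in the $\delta(K)$-collar of $c$. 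Likewise, in the second case $f(d)$ lies in the $\delta(K)$-collar of $d$.

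Next I would locate $p$ inside one of these collars. Since $f$ is a homeomorphism of $S$, the point $p$ lies on \emph{some} image curve through it — in particular, if $p$ happens to lie on $c$ then $q = f(p) \in f(c)$, and similarly if $p$ lies on $d$. The cleanest way to set this up is to choose the basepoint $p$ to lie on the curve $c$ (which we may, since $c$ is a nonempty geodesic). Then for every $q \in S$ we pick $f \in \mathcal{F}$ with $f(p) = q$; either $[f(c)] = [c]$, in which case $q = f(p) \in f(c) \subset N_{\delta(K)}(c)$, or $[f(d)] = [d]$, in which case we need $p$ to also lie on $d$ — which is not automatic. To handle this I would instead run the argument with two basepoints, or equivalently observe: by transitivity, there is $g \in \mathcal{F}$ carrying a point of $c$ to a point of $d$, so after post-composing we may assume without loss of generality that we have basepoints $p_c \in c$ and $p_d \in d$ available, and for a given target $q$ we use whichever of the two the relevant homotopy class forces. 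Concretely: given $q$, choose $f$ with $f(p_c) = q$; if $[f(c)]=[c]$ we are done as above; if instead $[f(d)]=[d]$, we separately choose $h$ with $h(p_d)=q$ and note $[h(c)]=[c]$ or $[h(d)]=[d]$ — the cleanest fix is to carry out the dichotomy uniformly, concluding that every point of $S$ lies within $\delta(K)$ of $c$ or within $\delta(K)$ of $d$, i.e. $S = N_{\delta(K)}(c) \cup N_{\delta(K)}(d)$.

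The function $\delta$ is exactly the one supplied by Proposition \ref{geoNBH}, so $\delta = \delta(K) \to 0$ as $K \to 1$, which is the asserted dependence. I expect the main obstacle to be the bookkeeping in the previous paragraph: making precise which basepoint and which map to use so that \emph{every} $q \in S$ genuinely gets covered by one of the two collars, rather than covering only those $q$ reachable from a single basepoint by a map that fixes the "wrong" curve. Once the basepoint is chosen on $c$ (resp.\ a second on $d$) and the dichotomy "$f$ fixes $[c]$ or $[d]$" is invoked for the map sending the basepoint to $q$, the geodesic-neighborhood estimate of Proposition \ref{geoNBH} closes the argument immediately, and no further computation is needed at this stage — the area bound and its failure for small $K$ are deferred to the subsequent two steps of the proof of Proposition \ref{existence}.
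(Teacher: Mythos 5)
Your overall strategy (transitivity plus Proposition \ref{geoNBH}) is the paper's, but the ``bookkeeping'' difficulty you flag in your second paragraph is not a cosmetic issue --- it is the actual content of the claim, and your proposal does not close it. With a basepoint $p_c \in c$ and a map $f$ with $f(p_c)=q$ that happens to fix $[d]$ but not $[c]$, the conclusion $f(d)\subset N_{\delta}(d)$ says nothing about where $q=f(p_c)$ lands, since $p_c$ need not be anywhere near $d$. Introducing a second basepoint $p_d\in d$ and a second map $h$ with $h(p_d)=q$ does not help either: $h$ might fix only $[c]$, and then neither map localizes $q$. Your sentence ``the cleanest fix is to carry out the dichotomy uniformly, concluding that every point of $S$ lies within $\delta(K)$ of $c$ or of $d$'' simply restates the claim to be proved.

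The missing ingredient is the hypothesis of Proposition \ref{existence} that you never invoke: the distance between $c$ and $d$ is at most $\frac{1}{|c|}$. The paper uses a \emph{single} basepoint $x$ lying within $\frac{1}{|c|}$ of \emph{both} curves. Then for any $q$, choosing $f$ with $f(x)=q$, whichever of $[c]$ or $[d]$ is fixed by $f$, the image of that curve lies in a $\delta^*(K)$-collar of itself by Proposition \ref{geoNBH}, and $q=f(x)$ lies within the $f$-image of a $\frac{1}{|c|}$-ball around a point of that curve, hence in a slightly enlarged $\delta(K)$-neighborhood of $c$ or of $d$. Since $|c|\to\infty$ as $K\to 1$ (Lemma \ref{lemma2.2qchom}), the enlargement $\delta-\delta^*$ tends to $0$ along with $\delta^*$, giving $\delta(K)\to 0$. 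Without the common basepoint near both curves, the dichotomy in the hypothesis cannot be converted into a statement about the location of $q$, so your argument as written covers only those $q$ reachable by maps that fix the class of the curve containing your chosen basepoint.
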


\begin{proof}
By hypothesis, we can choose some $x\in S$ such that $x$ is in a $\frac{1}{|c|}$-neighborhood of both $c$ and $d$. Since $\mathcal{F}$ is transitive, for any point $y \in S$ we can find a map $f \in \mathcal{F}$ such that $f(x) = y$. By Proposition \ref{geoNBH}, we know that each $f \in \mathcal{F}$ sends any point on $c$ or $d$ to a point contained in a $\delta^*(K)$-neighborhood of a geodesic, where $\delta^* \rightarrow 0$ as $K \rightarrow 1$. By continuity, we know that each image of $x$ will be in a slightly larger neighborhood of a geodesic, say a $\delta(K)$-neighborhood. Notice that as $|c|$ increases (as $K \rightarrow 1$ by Lemma \ref{lemma2.2qchom}), we have $\delta \rightarrow \delta^*$ because $c$ and $d$ are separated by a distance $\frac{1}{|c|}$. 

Now, if all maps $f \in \mathcal{F}$ fix at least one of $[c]$ or $[d]$, the image of $x$ must remain in a $\delta$-neighborhood of at least one of these curves. It follows that $S$ is contained in the union of a $\delta$-neighborhood of $c$ and a $\delta$-neighborhood of $d$, where $\delta$ depends only on $K$. By Proposition \ref{geoNBH}, together with Lemma \ref{lemma2.2qchom}, we can make $\delta$ arbitrarily small by sending $K\rightarrow 1$. These neighborhoods are collars around the curves $c$ and $d$ of total width $2\delta$, and as remarked above, sending $K\rightarrow 1$ will send $\delta \rightarrow 0$. This completes the proof of the claim. 
\end{proof} 
 
In the rest of the proof, we show that when $\delta$ is small, $S$ cannot be contained in these two $\delta(K)$-neighborhoods of $c$ and $d$. 

\begin{claim} \label{areaboundclaim}
Let $S$ be as above. Then:
$$
\mathrm{Area}(S) < 2\pi(\frac{3\log(4g-2)}{\delta} + 2)(\cosh(2\delta) - 1).
$$
\end{claim}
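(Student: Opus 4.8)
The plan is to bound the area of $S$ by bounding the areas of the two $\delta$-neighborhoods (collars) of $c$ and $d$ separately, then adding them. Since Claim \ref{neighborhood} tells us $S$ is contained in the union of a $\delta$-neighborhood of $c$ and a $\delta$-neighborhood of $d$, it suffices to estimate $\mathrm{Area}(N_\delta(c)) + \mathrm{Area}(N_\delta(d))$, where $N_\delta(\gamma)$ denotes the $\delta$-neighborhood of the geodesic $\gamma$. First I would recall the standard hyperbolic geometry fact: in $\mathbb{H}^2$, the region within distance $\delta$ of a geodesic segment of length $L$ has area $2L\sinh(\delta)$ from the two ``rectangular'' sides, plus the area of the two half-disks of radius $\delta$ capping the ends, each of area $2\pi(\cosh\delta - 1)/2 = \pi(\cosh\delta-1)$. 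Actually, to get a clean upper bound I would rather overestimate: the $\delta$-neighborhood of a closed geodesic of length $L$ on $S$ lifts to (a subset of) the $\delta$-neighborhood of a geodesic line in $\mathbb{H}^2$, and a fundamental domain for this action has area at most the area of a hyperbolic ``rectangle'' of base $L$ and height $2\delta$ built around the geodesic. Using the area form in Fermi coordinates $(t,\rho)$ around the geodesic, $dA = \cosh\rho\, dt\, d\rho$, this area is $\int_0^L\int_{-\delta}^{\delta}\cosh\rho\,d\rho\,dt = 2L\sinh\delta$. For the purposes of matching the target bound I would instead use the cruder containment in a disjoint union of disks, or directly use the factor $(\cosh(2\delta)-1)$ that appears.

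The key arithmetic input is a bound on $L = |c|$ and $|d|$. By Proposition \ref{shortestcurve}, $|c| \leq 2\log(4g-2)$, and by hypothesis $|d| \leq 2|c| \leq 4\log(4g-2)$, so $|c| + |d| \leq 6\log(4g-2) = 3 \cdot 2\log(4g-2)$; the $3\log(4g-2)$ in the statement comes from writing this as $\frac{6\log(4g-2)}{2\delta}\cdot(\text{something})$ after covering each collar by roughly $\lceil L/\delta\rceil \leq L/\delta + 1$ hyperbolic disks of radius $2\delta$ along the geodesic. Concretely: cover the geodesic $c$ of length $L_c$ by $\lceil L_c/\delta\rceil$ points spaced $\delta$ apart; then $N_\delta(c)$ is contained in the union of the $2\delta$-balls centered at these points (any point within $\delta$ of $c$ is within $\delta$ of the nearest sample point plus $\delta$ along the geodesic, hence within $2\delta$ of that sample point). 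Each hyperbolic disk of radius $2\delta$ has area $2\pi(\cosh(2\delta)-1)$. So $\mathrm{Area}(N_\delta(c)) \leq (L_c/\delta + 1)\cdot 2\pi(\cosh(2\delta)-1)$, and similarly for $d$. Summing, $\mathrm{Area}(S) \leq ((L_c + L_d)/\delta + 2)\cdot 2\pi(\cosh(2\delta)-1) \leq (6\log(4g-2)/\delta + 2)\cdot 2\pi(\cosh(2\delta)-1)$. Hmm — that gives a $6$, not a $3$; to recover the stated $3\log(4g-2)$ I would instead space the sample points $2\delta$ apart (so each point within $\delta$ of $c$ is within $\delta + \delta = 2\delta$ of a sample point) giving $\lceil L_c/(2\delta)\rceil$ disks, hence $(L_c+L_d)/(2\delta) + 2 \leq 6\log(4g-2)/(2\delta) + 2 = 3\log(4g-2)/\delta + 2$ disks total, yielding exactly the claimed bound. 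The strictness of the inequality follows because the union of disks strictly overcovers and the sample-point count is a ceiling.

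The main obstacle — really the only delicate point — is getting the constants to line up exactly with the stated bound, i.e. choosing the right spacing of sample points (or the right ``rectangle'' decomposition) so that the coefficient is $3\log(4g-2)$ with a ``$+2$'' additive term and the disk area factor is exactly $2\pi(\cosh(2\delta)-1)$. This is purely bookkeeping: one must be careful that the covering disks of radius $2\delta$ genuinely contain $N_\delta(c)$ (which forces spacing $\leq 2\delta$), and that $\lceil L_c/(2\delta)\rceil \leq L_c/(2\delta) + 1$ so the two ceilings contribute the ``$+2$''. Everything else — the area of a hyperbolic disk, Proposition \ref{shortestcurve}, the hypothesis $|d| \leq 2|c|$ — is immediate. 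I would not expect to need the $\frac{1}{|c|}$-distance hypothesis here; that was already used in Claim \ref{neighborhood} to ensure a common basepoint $x$ lies in both collars. The final step of the overall argument (not this claim) will be to observe that as $K \to 1$ we have $\delta \to 0$ while $g$ can be taken fixed, and $(\cosh(2\delta)-1)/\delta \to 0$, so the right-hand side $\to 0$, contradicting $\mathrm{Area}(S) = 4\pi(g-1) \geq 4\pi$; but that contradiction is drawn after this claim, so it is out of scope here.
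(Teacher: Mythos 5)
Your proposal is correct and matches the paper's argument: both cover the two $\delta$-collars by hyperbolic disks of radius $2\delta$ centered on the geodesics and spaced $2\delta$ apart, count at most $|c|/(2\delta)+1$ and $|d|/(2\delta)+1$ disks, and finish with $|d|\leq 2|c|$ and Proposition \ref{shortestcurve}. The only (immaterial) difference is how the covering is justified — you use the triangle inequality via the nearest sample point, while the paper checks the collar width with the hyperbolic Pythagorean theorem; both work.
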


\begin{proof}
Since $S$ is contained in small neighborhoods of the two curves $c$ and $d$, we will bound the area of the neighborhoods of each curve from above as follows. Each collar can be covered by hyperbolic disks (2-balls) of radius $2\delta$ whose centers lie on the main curve (See Figure 2). We arrange them such that each disk is separated by a distance of $2\delta$ from the adjacent disks. That gives a total of less than $\frac{|c|}{2\delta} + 1$ disks (taking the smallest integer greater than $\frac{|c|}{2\delta}$) for curve $c$, and $\frac{|d|}{2\delta} + 1$ disks for $d$. One disk for each of $c$ and $d$ will be less than $2\delta$ away from one of its neighbors if $|c|$ or $|d|$ is not an integer multiple of $2\delta$.

\begin{figure}[htbp]
	\centering
	\includegraphics[width = .8 \textwidth] {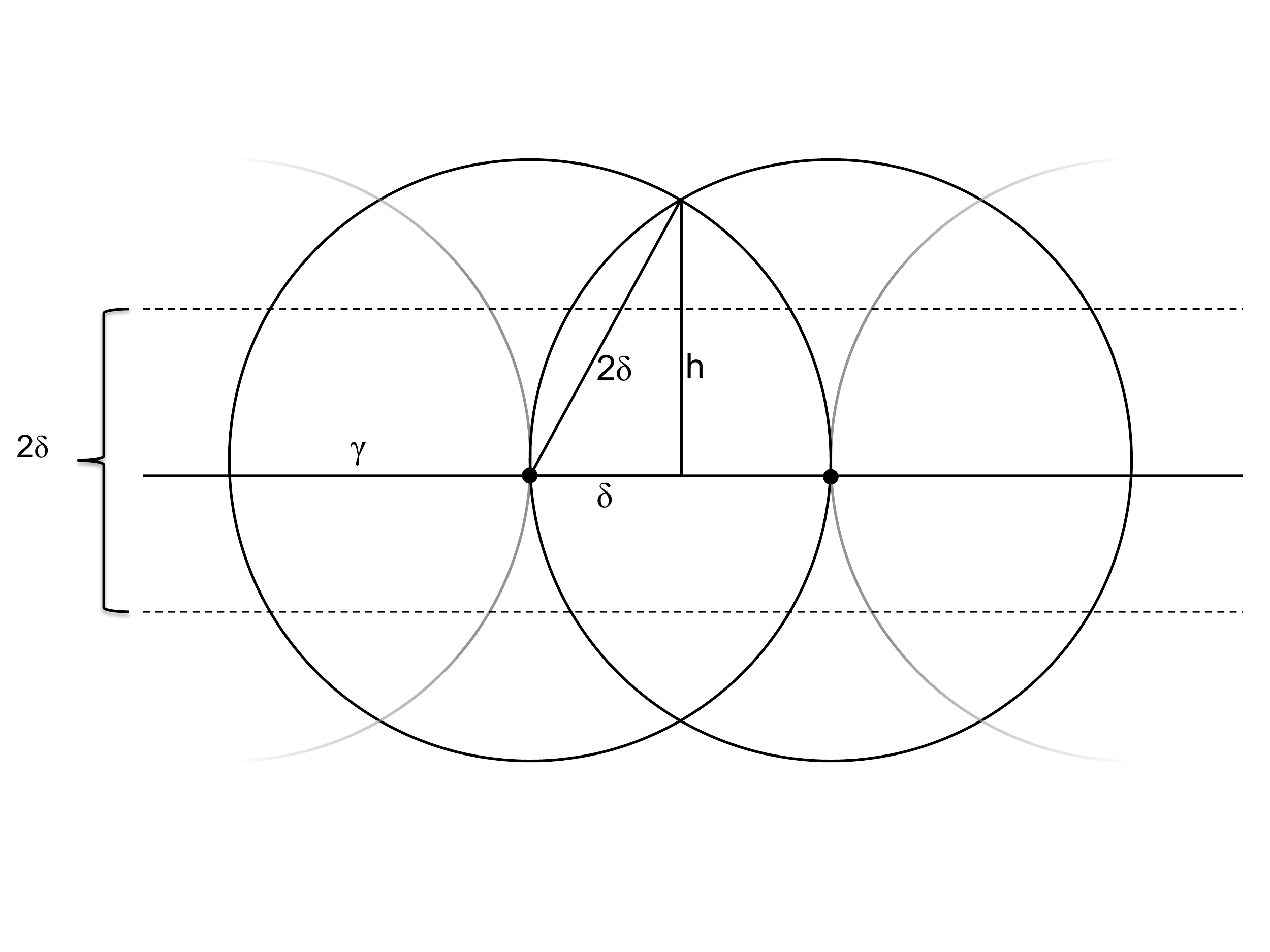}
	\label{nbh}
	\caption{Covering the $\delta$-neighborhood of the curve $\gamma$ with disks of radius $2\delta$. The hyperbolic right triangle has base $\delta$ (half the separation between the disks) and height $h\geq\delta$. The hypotenuse is the radius of a disk. We do this for curves $c$ and $d$ }
\end{figure}

In order to show that the disks cover the entire collar, we must show that the height $h$ of the hyperbolic right triangle (i.e. half of the width of the area covered by the disks) in Figure 2 is at least $\delta$. In that case, the disks will cover a collar around their respective curve of total width at least $2\delta$ (since the disks are convex), thus covering the $\delta$-neighborhood of the curve. This follows from the hyperbolic Pythagorean theorem, which gives 
$$
\cosh(2\delta) = \cosh(\delta)\cosh(h).
$$
Indeed, supposing otherwise and applying the appropriate identities, we have
$$
h < \delta \Rightarrow \cosh(h) < \cosh(\delta) \Rightarrow \frac{\cosh(2\delta)}{\cosh(\delta)} < \cosh(\delta)
$$
$$
\Rightarrow 2\cosh^2(\delta) - 1 < \cosh^2(\delta) \Rightarrow \cosh^2(\delta) < 1,
$$

contradicting the fact that $\cosh(x) \geq 1$ for all $x \in \mathbb{R}$. Thus, the disks cover the $\delta$-neighborhoods of $c$ and $d$, whence they cover $S$. 

Recall that the area of a hyperbolic disk of radius $r$ is $2\pi (\cosh{r} - 1)$. Since our collection of disks bounds the area of $S$ from above, we have: 

\begin{equation} \label{prelimareabound}
\mathrm{Area}(S) < (\frac{|c|}{2\delta} + 1)2\pi(\cosh(2\delta) -1) + (\frac{|d|}{2\delta} + 1)2\pi(\cosh(2\delta) -1)
\end{equation}

where $(\frac{|c|}{2\delta} + 1)$ and $(\frac{|d|}{2\delta} + 1)$ bound the number of disks from above, and $2\pi(\cosh(2\delta)-1)$ is the area of each disk. Since by hypothesis we have that $|d| \leq 2|c|$, we can rewrite this as: 

\begin{equation} \label{areabound}
\mathrm{Area}(S) <  (\frac{3|c|}{2\delta} + 2)2\pi(\cosh{2\delta} -1)
\end{equation}

From Proposition \ref{shortestcurve} we also have that $|c| \leq 2\log{(4g-2)}$. Together with (\ref{areabound}), this gives: 

\begin{equation} \label{bound_ineq}
\mathrm{Area}(S) < 2\pi(\frac{3\log(4g-2)}{\delta} + 2) (\cosh{2\delta} -1),
\end{equation}

as desired. 
\end{proof} 

Armed with the inequality (\ref{bound_ineq}), we can proceed to the final proof of Proposition \ref{existence}. 

\begin{proof}[Proposition \ref{existence}]
Let $S$ be as above. We need to show that there exists a universal constant $K_0 > 1$ such that if $K \leq K_0$, then for some $f \in \mathcal{F}$, we have both $[f(c)] \neq [c]$ and $[f(d)] \neq [d]$. We show that $K$ is bounded from below by some $K_0 > 1$ for surfaces with families in which no such $f$ exists. Recall that the area of a hyperbolic surface $S$ of genus $g\geq2$ is given by:

\begin{equation} \label{area}
\mathrm{Area}(S) = 4\pi(g-1). 
\end{equation}

Now, from the previous claim we have an upper bound for the area of our surface $S$ in terms of the genus $g$ and $\delta = \delta(K)$. Combining (\ref{bound_ineq}) and (\ref{area}), we have: 

\begin{equation} \label{main_ineq}
4 \pi (g - 1) < 2\pi(\frac{3\log(4g-2)}{\delta} + 2) (\cosh(2\delta) -1).
\end{equation}

This inequality follows from the upper bound on the area from Claim \ref{areaboundclaim}, the area of a hyperbolic surface from (\ref{area}), and the upper bound on the lengths of $c$ from Proposition (\ref{shortestcurve}). Simplifying, we obtain: 

\begin{equation} \label{mid_ineq}
\frac{2(g-1)}{\log(4g-2)} < (\frac{3}{\delta} + \frac{2}{\log(4g-2)})(\cosh(2\delta) - 1) < (\frac{3}{\delta} +2)(\cosh(2\delta) - 1).
\end{equation}

We have $g \geq 2$, and so (\ref{mid_ineq}) gives:

\begin{equation} \label{final_ineq}
\frac{2}{\log(6)}  < (\frac{3}{\delta} +2)(\cosh(2\delta) - 1).
\end{equation}

Notice that for the upper bound in (\ref{final_ineq}), we have

\begin{equation} \label{deltalimit}
\lim_{\delta \rightarrow 0} (\frac{3}{\delta} +2)(\cosh(2\delta) - 1) = 0.
\end{equation}

Now, (\ref{final_ineq}) and (\ref{deltalimit}) show that there exists a uniform lower bound on $\delta$. By Lemma \ref{lemma2.2qchom} and Proposition \ref{geoNBH}, we can choose $K>1$ such that $\delta$ becomes arbitrarily small, which sends the right-hand side of (\ref{final_ineq}) to 0. Thus, there must be a universal lower bound $K_0 > 1$ on $K$. If the transitive family $\mathcal{F}$ does not include maps that send $c$ to non-homotopic curves, then $K > K_0$. 

Thus for $1< K \leq K_0$, the transitive family $\mathcal{F}$ must include a map that sends both $c$ and $d$ to a non-homotopic curve. 

\end{proof} 

Using Proposition \ref{existence}, will now prove Theorem \ref{main}. 

\section{Proof of Theorem \ref{main}} \label{mainproof}

\begin{proof}
Let $S$ be a closed hyperbolic Riemann surface, and suppose $S$ is Torelli-K-quasiconformally homogeneous with family of homeomorphisms $\mathcal{F}$. Suppose $1 < K \leq K_0$ from Proposition \ref{existence}. Let $c$ be a simple closed geodesic on $S$ of minimal length. Using Proposition \ref{existence}, choose some $f \in \mathcal{F}$ such that $[c] \neq [f(c)]$ and $f$ is also homotopically non-trivial on any geodesic in a $\frac{1}{|c|}$-neighborhood of $c$. 

We have the following two cases: (1) $i(c, f(c)) \geq 4$ or $i(c, f^2(c)) \geq 4$, or (2) $c$ is nonseparating and $i(c, f(c))$ and $i(c, f^2(c))$ are both less than 4. Notice that these cover all possibilities: if $c$ is separating, then Lemma \ref{lemma2.2} gives us that $i(c, f(c)) \geq 4$. If $c$ is nonseparating, then either one of $i(c, f(c))$ or $i(c, f^2(c))$ is greater than 4, which is case 1, and otherwise we have case 2. 
\newline

{\it Case 1}: Let $h$ be either $f$ or $f^2$, where $i(c, h(c)) \geq 4$. Since $i(c, h(c)) \geq 4$, any member of the homotopy class $[h(c)]$ will have at least 4 intersections with $c$. In particular, let $c'$ be the geodesic homotopic to $h(c)$. The intersection points $c \cap c'$ cut $c$ and $c'$ into arcs. Since there are at least 4 such points, there is an arc $a$ of $c'$ which satisfies
\begin{equation}
|a| \leq \frac{|c'|}{4} \leq \frac{K^2 |c|}{4}.
\end{equation}
The second inequality follows from Lemma \ref{lemma2.3qchom}, with $K^2$ since $h$ is possibly $f^2$, and $f^2$ is a $K^2$-quasiconformal homeomorphism. The endpoints of $a$ cut $c$ into two arcs, one of which, say $b$, has length $|b| \leq |c|/2$. The union $a \cup b$ is a simple closed curve. It must be homotopically nontrivial since otherwise we could, by homotopy, reduce the number of intersections of $c$ and $c'$ below $i(c, c')$. Now, recall $c$ is the shortest closed geodesic, so $|c| \leq |a\cup b|$. Then we have: 

\begin{equation}
|c| \leq |a| + |b| \leq \frac{K^2 |c|}{4} + \frac{|c|}{2}
\Rightarrow 1 \leq \frac{K^2}{4} + \frac{1}{2}  
\end{equation}

Thus, $2 \leq K^2 \Rightarrow K \geq \sqrt{2}$. 
\newline

{\it Case 2}: By Lemma \ref{lemma2.3}, either $i(c, f(c)) = 2$ or $i(c, f^2(c)) = 2$. Let $h$ be either $f$ or $f^2$, where $i(c, h(c)) = 2$. Now, let $c'$ be the geodesic homotopic to $h(c)$; we still have $i(c, c') = 2$. Let $d$ and $d'$ be the separating curves from Lemma \ref{lemma2.4} with $c$ and $c'$ as the homologous pair (since $f$ is Torelli). Alternatively, the intersection points $c \cap c'$ define two arcs of $c$, say $a$ and $a'$, and two arcs of $c'$, say $b$ and $b'$ as in Figure 1. The curves $d$ and $d'$ are then $d \sim a\cup b$ and $d' \sim a' \cup b'$. Now,

\begin{equation} 
|d| + |d'| = |a| + |b| + |a'| + |b'| = |c| + |c'| \leq |c| + K^2|c|
\end{equation}

by Lemma \ref{lemma2.3qchom} and since $h$ may be $f^2$, which is a $K^2$-quasiconformal homeomorphism. It follows that at least one of $d$ and $d'$, say $d$, has length bounded above by half of $|c| +|c'|$:

\begin{equation} \label{dbound}
|d| \leq \frac{|c| + K^2|c|}{2}
\end{equation}

We now consider $d_1$, the geodesic homotopic to $d$, which is a separating curve, and the geodesic $e_1$ homotopic to $e$. We have $|d_1| \leq |d|$ and $|e_1| \leq |e|$. To continue, we require the following lemma, which will be proven at the end: 

\begin{lemma} \label{cd_dist}
For sufficiently small $K$, the curves $c$ and $d_1$ are within a distance of $\frac{1}{|c|}$ from each other.
\end{lemma}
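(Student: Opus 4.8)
The plan is to realize $c$ and $d_1$ as two of the three cuffs of a single pair of pants, all of whose cuffs have length comparable to $|c|$, and then read off the distance between them from the classical trigonometric formula for a pair of pants. Since $|c|\geq 2\iota(S)\to\infty$ as $K\to 1$ by Lemma~\ref{lemma2.2qchom}, a pair of pants with three long cuffs is a very thin neighbourhood of its spine, and this is what will force $c$ and $d_1$ to within distance $1/|c|$. First I would pass to geodesic boundary: let $d_1,d_1',e_1,e_1'$ be the closed geodesics freely homotopic to the curves $d,d',e,e'$ of Figure~\ref{4holesph}. By Lemma~\ref{lemma2.4} these bound the four-holed sphere $P=N(c\cup c')$; each is essential (otherwise $P$ together with a disc capping that boundary component would be a pair of pants, incompatible with $i(c,c')=2$) and they are pairwise non-isotopic, so $d_1,d_1',e_1,e_1'$ bound a four-holed sphere $\hat P$ isotopic to $P$. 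A compact subsurface with geodesic boundary contains the geodesic representative of every essential curve freely homotopic into it (it lifts to a convex subset of $\mathbb{H}^2$), so $c$ and $c'$ — already geodesic and essential in $P$ — lie in $\hat P$.

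Next I would locate $c$ inside $\hat P$. Being a simple closed curve in a planar surface, $c$ is separating in $\hat P$; moreover $c$ is not isotopic to any component of $\partial\hat P$, since isotopic geodesics coincide and $c$ being a boundary geodesic of the surface $\hat P\ni c'$ would force $i(c,c')=0$. Hence $c$ splits the four boundary curves of $\hat P$ two and two, so cutting $\hat P$ along $c$ produces two pairs of pants, each with $c$ as a cuff; let $Q$ be the one containing $d_1$, with third cuff $\gamma$ of length $\ell_3$. Counting arc lengths exactly as for $d,d'$ in the proof of Theorem~\ref{main} gives $|d|+|d'|=|e|+|e'|=|c|+|c'|\leq(1+K^2)|c|$; since each of $d,d',e,e'$ is essential, each of $d_1,d_1',e_1,e_1'$ is a nontrivial closed geodesic of length $\geq|c|$ ($c$ being shortest), so every boundary length of $\hat P$ — in particular $\ell_3$ and $\ell_2:=|d_1|$ — lies in $[\,|c|,\,K^2|c|\,]$.

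Finally, the common perpendicular between the cuffs $c$ and $d_1$ of $Q$ has length $p\geq\operatorname{dist}_S(c,d_1)$, and the pair-of-pants formula gives
\[
\cosh p \;=\; \frac{\cosh(\ell_3/2)+\cosh(|c|/2)\cosh(\ell_2/2)}{\sinh(|c|/2)\sinh(\ell_2/2)}
\;=\; 1+\frac{\cosh(\ell_3/2)+\cosh\bigl((|c|-\ell_2)/2\bigr)}{\sinh(|c|/2)\sinh(\ell_2/2)}.
\]
Here $\ell_3-|c|-\ell_2\leq(K^2-2)|c|\to-\infty$ and $\bigl||c|-\ell_2\bigr|\leq(K^2-1)|c|$, while the denominator grows like $\tfrac14 e^{|c|}$; hence the right-hand side equals $1+O\bigl(e^{(K^2-2)|c|/2}\bigr)$ and so $p=O\bigl(e^{(K^2-2)|c|/4}\bigr)$. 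Since $|c|\to\infty$ as $K\to1$, this is at most $1/|c|$ once $K$ is sufficiently close to $1$, giving $\operatorname{dist}_S(c,d_1)\leq 1/|c|$.

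The main obstacle I anticipate is the combinatorial bookkeeping of the middle step: making certain that $c$ and $d_1$ genuinely lie on a common pair of pants in $\hat P$ — that is, excluding the degenerate possibility that $c$ is isotopic to a boundary component — and that the third cuff of $Q$ is short relative to $|c|+|d_1|$. Once the pair of pants is identified the distance estimate is routine hyperbolic trigonometry; note also that only the infimal distance is needed, so it suffices to measure inside $Q$.
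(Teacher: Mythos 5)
Your argument is correct and follows essentially the same route as the paper: both identify a pair of pants with cuffs $c$, $d_1$, and a third boundary geodesic, all of length between $|c|$ and a fixed multiple of $|c|$, and then apply the right-angled-hexagon (pair-of-pants) law of cosines to conclude that the distance between $c$ and $d_1$ decays exponentially in $|c|$ and hence drops below $1/|c|$ as $K\to 1$. Your version is somewhat more careful about the topological preliminaries (essentiality of $d,d',e,e'$ and the passage to geodesic boundary), which the paper's proof takes for granted.
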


Now, suppose $K>1$ is small enough so Lemma \ref{cd_dist} can be used. Applying the lemma and Proposition \ref{existence}, we see that that $[f(d_1)] \neq [d_1]$. We can now apply Lemma \ref{lemma2.2}, so that $i(d_1, f(d_1))$ is at least 4. As in Case 1, if $\tilde a$ is the shortest arc of the geodesic homotopic to $f(d_1)$ cut off by $d_1$ and $\tilde b$ is the shortest arc of $d_1$ cut off by $\tilde a$, then

\begin{equation} \label{case1ond}
|\tilde a\cup \tilde b| \leq |d_1|(\frac{K}{4} + \frac{1}{2}).
\end{equation}

Note that we can use $K$ instead of possibly $K^2$ since $d$ is separating, so Lemma \ref{lemma2.2} applies with $f$. Recall that we also have 

\begin{equation} \label{lambdaboundcase2}
|c| \leq |\tilde a\cup \tilde b|. 
\end{equation}

Combining (\ref{dbound}), (\ref{case1ond}), (\ref{lambdaboundcase2}), and the fact that $|d_1| \leq |d|$, we see that

\begin{equation}
|c| \leq (\frac{K}{4} + \frac{1}{2})(\frac{|c| + K^2|c|}{2}) 
\end{equation}

Which then gives 

\begin{equation}
1 \leq (\frac{K}{4} + \frac{1}{2})(\frac{1 + K^2}{2}) \Rightarrow K^3 + 2K^2 + K - 6 \geq 0.
\end{equation}

The cubic polynomial in $K$ has one real root, and so 

\begin{equation}
K \geq -\frac{2}{3} + \frac{1}{3}\sqrt[3]{82 - 9\sqrt{83}} + \frac{1}{3}\sqrt[3]{82 + 9\sqrt{83}} \approx 1.218
\end{equation}

approximated from below. 
\newline

Now, since these bounds are independent of genus, we have that $K > 1.218$. We can then see that for some $K_T > 1$, any such surface $S$ cannot be Torelli-$K$-quasiconformally homogeneous with $K \leq K_T$. Thus we have a universal constant $K_T>1$ such that any Torelli-$K$-quasiconformally homogeneous closed hyperbolic Riemann surface of genus $g \geq 2$ must have $K \geq K_T > 1$. This completes the proof. 

\end{proof} 

\begin{proof} [Proof of Lemma \ref{cd_dist}]
First, consider the pair of pants defined by curves $c$, $d_1$, and $e_1$, as in Figure \ref{4holesph}. Notice that as $K$ approaches 1, we eventually have: 
\begin{equation} \label{d1e1bound}
|c| \leq |d_1| \leq \frac{3|c|}{2}, |c| \leq |e_1| \leq \frac{3|c|}{2}. 
\end{equation}
This follows from (\ref{dbound}), which can also apply with the same bounds to the curve $e_1$, and the fact that $c$ is the shortest geodesic on $S$. 

Consider now the right-angled hyperbolic hexagon formed by inserting perpendiculars connecting each of $c$, $d_1$, and $e_1$, and cutting off a fundamental hexagon from this pair of pants. Then there are three alternating sides of length $\frac{|c|}{2}$, $\frac{|d_1|}{2}$, and $\frac{|e_1|}{2}$. By Lemma \ref{lemma2.2qchom} we also have that as $K\rightarrow1$, $|c|\rightarrow\infty$. Recall from \cite{ratcliffe} the Law of Cosines for right-angled hyperbolic hexagons: 
\begin{equation} \label{hexagon}
\cosh(z') = \coth(x)\coth(y) + \frac{\cosh(z)}{\sinh(x)\sinh(y)},
\end{equation}
where $x$, $y$, and $z$ are the lengths of alternate sides of the hexagon, and $z'$ is the length of the side opposite the side of length $z$. Let $x$, $y$, and $z$ correspond to the sides of length $\frac{|c|}{2}$, $\frac{|d_1|}{2}$, and $\frac{|e_1|}{2}$ respectively, and so $z'$ is the distance between curves $c$ and $d_1$. We wish to show that $z' \rightarrow 0$ faster than $\frac{1}{|c|} \rightarrow 0$ as $K\rightarrow 1$. Notice from (\ref{d1e1bound}) that, as $|c| \rightarrow \infty$, $x$, $y$, and $z$ all increase within a factor of $\frac{3}{2}$ from each other. Thus, the $\coth(x)\coth(y)$ term proceeds exponentially to 1, and the $\frac{\cosh(z)}{\sinh(x)\sinh(y)}$ term exponentially approaches 0. 

Now, the right-hand side of (\ref{hexagon}) proceeds to unity, and so $\cosh(z') \rightarrow 1$. We can then see that $z'$, the distance between curves $c$ and $d_1$, approaches zero exponentially in $|c|$ as $|c|$ increases. Therefore as $K \rightarrow 1$, the curves $c$ and $d_1$ will approach each other exponentially in $|c|$, and so for sufficiently small $K$, they will be closer than $\frac{1}{|c|}$, as desired. 

\end{proof}

\end{document}